\DeclareFontFamily{U}{wncy}{}
\DeclareFontShape{U}{wncy}{m}{n}{<->wncyr10}{}
\DeclareSymbolFont{mcy}{U}{wncy}{m}{n}
\DeclareMathSymbol{\Sh}{\mathord}{mcy}{"58}
\def\tr{{ tr}}
\newcommand{\Fq}{\mathbb F_q}
\newcommand{\Cor}{\mathrm{Cor}}
\newcommand{\Disc}{\mathrm{Disc}}
\newtheorem{theorem}{Theorem}[section]
\newtheorem{lemma}[theorem]{Lemma}
\newtheorem{corollary}[theorem]{Corollary}
\newtheorem{proposition}[theorem]{Proposition}
\newtheorem{definition}[theorem]{Definition}
\newtheorem{algor}[theorem]{Algorithm}
\newtheorem{problem}{Problem}
\theoremstyle{remark}
\theoremstyle{remark}\newtheorem{remark}[theorem]{Remark}
\title{Explicit isomorphisms of quaternion algebras over quadratic global fields}
\author{Tímea Csahók\footnote{University of Oxford}, Péter Kutas\footnote{Eötvös Loránd University and University of Birmingham}, Mickaël Montessinos\footnote{Vilnius University, Faculty of Mathematics and Informatics, Institute of Mathematics},Gergely Zábrádi\footnote{Eötvös Loránd University and Rényi Institute of Mathematics, Lendület ``Automorphic'' Research Group}}
\begin{document}

\maketitle
\begin{abstract}
\noindent Let $L$ be a separable quadratic extension of either $\mathbb{Q}$ or $\mathbb{F}_q(t)$. We propose efficient algorithms for finding isomorphisms between quaternion algebras over $L$. Our techniques are based on computing maximal one-sided ideals of the corestriction of a central simple $L$-algebra.  
\end{abstract}

\section{Introduction}

In this paper we consider a special case of the following algorithmic problem. Let $K$ be a global field and let $A$ and $B$ be central simple algebras over $K$ given by a $K$-basis and a multiplication table of the basis elements. The coefficients in the multiplication table are called structure constants. The task is to decide whether $A$ and $B$ are isomorphic, and if so, find an explicit isomorphism between them. A special case of this problem when $B=M_n(K)$ is referred to as the \textit{explicit isomorphism problem} which has various applications in arithmetic geometry \cite{cremona2015explicit},\cite{fisher2013explicit},\cite{fisher2014computing}, computational algebraic geometry \cite{de2006lie} and coding theory \cite{gomez2016new},\cite{gomez2020primitive}. 

\sloppy In 2012, Ivanyos, Rónyai and Schicho \cite{ivanyos2012splitting} exhibited an algorithm for the explicit isomorphism problem in the case where $K$ is an algebraic number field. Their algorithm is a polynomial-time ff-algorithm (which means one is allowed to call an oracle for factoring integers and polynomials over finite fields) in the case where the dimension of the matrix algebra, the degree of the number field and the discriminant of the number field are all bounded. More concretely, the running time of the algorithm is exponential in all these parameters. They also show that finding explicit isomorphisms between central simple $K$-algebras of dimension $n^2$ over $K$ can be reduced to finding an explicit isomorphism between an algebra $A$ and $M_{n^2}(K)$. 

Then in \cite{fisher2017higher} and independently in \cite{kutas2019splitting} an algorithm was provided when $A$ is isomorphic to $M_2(\mathbb{Q}(\sqrt{d}))$ where the algorithm is polynomial in $\log(d)$. 
The case where $K=\mathbb{F}_q(t)$, the field of rational functions over a finite field was considered in \cite{ivanyos2018computing} where the authors propose a randomized polynomial-time algorithm. The algorithm is somewhat analogous to the algorithm of \cite{ivanyos2012splitting} but it is polynomial in the dimension of the matrix algebra. Similarly to the number field case, this was extended to quadratic extensions (now with a restriction to odd characteristics) in \cite{ivanyos2019explicit}. 


In this paper we initiate a new method for dealing with field extensions which is analogous to Galois descent. It is known that finding an explicit isomorphism between $A$ and $M_n(K)$ is polynomial-time equivalent to finding a rank 1 element in $A$. Thus if one could find a subalgebra of $A$ isomorphic to $M_n(\mathbb{Q})$ or $M_n(\mathbb{F}_q(t))$, then one could apply the known algorithms for the subalgebra and that would give an exponential speed-up in both cases. Furthermore, these types of methods should work equally for the function field and number field case which have completely different applications. In \cite{kutas2019splitting} and \cite{ivanyos2019explicit} this type of method is studied. In both cases one finds a central simple algebra over the smaller field in $A$ which is not necessarily a matrix algebra but when it is a division algebra, then it is split by the quadratic field (the center of $A$) which can be exploited. The disadvantage of these methods is that they are based on explicit calculations and reductions to finding nontrivial zeros of quadratic forms which do not generalize easily to higher extensions.

In this paper we reprove results of \cite{kutas2019splitting} in a more conceptual way and extend them to the isomorphism problem of two quaternion algebras over a quadratic extension. The main technique is to compute a maximal right ideal of the corestriction of the algebra $A$ (which is an explicit construction corresponding to the usual corestriction on cohomology groups) and apply it to construct an involution of the second kind on $A$. In general this might not be useful, but when $A$ possesses a canonical involution of the first kind, then composing the two kinds of involutions and taking fixed points gives us the central simple subalgebra over a smaller field. Fortunately, tensor products of quaternion algebras carry a canonical involution of the first kind which is exactly what we need. This provides an example of the explicit isomorphism problem when the degree of the field over $\mathbb{Q}$ or $\mathbb{F}_q(t)$ is fixed but the discriminant does not need to be bounded. 

We also implement our algorithm in Magma \cite{bosma1997magma}. In particular, this also involved implementing the main algorithm from \cite{ivanyos2018computing} and \cite{gomez2020primitive}. The same implementation was used in \cite{csahok2022finding} for matrix algebras of degree $2$ in even characteristic. Here we use it for algebras of higher degree and study its efficiency. Even though our main algorithm runs in polynomial time, the implementation is not practical. The bottleneck of the computation seems to be computing maximal orders in higher degree split central simple algebras. The computationally expensive part is not the factorization of the discriminant of the starting order (which in the rational function field case is particularly fast), just the fact that the currently known maximal order algorithms run in polynomial time but with a large exponent. We analyze the complexity of maximal order algorithms given in \cite[section 3]{ivanyos2018computing} and \cite[sections 3 and 4]{friedl1985polynomial} and we also provide some substantial speed-ups in the case relevant to our main algorithm (when the algebra is obtained as a corestriction). 

The paper is structured as follows. Section 2 contains number theoretic and algorithmic preliminaries. Section 3 is devoted to the general method of computing involutions of the second kind and computing Galois descents of quaternion algebras. In Section 4 we describe our main algorithm for finding explicit isomorphisms between quaternion algebras over quadratic extensions of either $\mathbb{Q}$ or $\mathbb{F}_q(t)$ (where $q$ can be even as well). Section 5 is devoted to complexity estimates and optimisation tricks to speed-up the computations. Section 6 contains some details about our Magma implementation\footnote{\url{https://github.com/QuaternionIsomorphisms/QuaternionIsomorphisms/}}.

\subsection*{Acknowledgements}
We would like to thank John Voight for helpful suggestions and comments on an earlier version of this manuscript.

\section{Preliminaries}
\subsection{General algebraic background}

\begin{definition}
Let $K$ be a field and let $A$ be a finite dimensional algebra over $K$. Then $A$ is a \textbf{central simple algebra} over $K$ if it is simple and its center $Z(A)$ equals $K$ (central). A central simple algebra $A$ over the field $K$ that has dimension 4 over $K$ is called a \textbf{quaternion algebra}.
\end{definition}

By a fundamental result of Wedderburn, a central simple algebra $A$ is isomorphic to the full  matrix algebra $M_n(D)$ for some division ring $D$. In particular, a quaternion algebra over $K$ is either a division algebra or is isomorphic to the algebra of $2\times 2$ matrices over $K$. 

\begin{definition}
Let $A$ be a central simple algebra over $K$. We say that $A$ is \textbf{split} by a field extension $L/K$ if $A\otimes_K L\simeq M_n(L)$ for a sufficient $n$. If a central simple algebra over $K$ is isomorphic to $M_n(K)$, then we call the algebra split (i.e. a shorter version of split by the extension $K/K$). 
\end{definition}

Now we recall some facts about the Brauer group. Our main reference is \cite{gille2017central}.

\begin{definition}
We call the central simple $K$-algebras $A$ and $B$ \textbf{Brauer equivalent} if there exist integers $m,m'>0$ such that $A\otimes_K M_m(K)\cong B\otimes_K M_{m'}(K)$. The Brauer equivalence classes of central simple $K$-algebras form a group under tensor product over $K$. This group is called the \textbf{Brauer group} $\operatorname{Br}(K)$ of $K$.
\end{definition}

In order to state the cohomological interpretation of the Brauer group we need to indroduce some further notation. For a field $K$ we put $K_{sep}$ for a fixed separable closure of $K$ and $G_K:=\operatorname{Gal}(K_{sep}/K)$ for the absolute Galois group. 

\begin{theorem}\cite[Thm.\ 4.4.3]{gille2017central} 
Let $K$ be a field. Then the Brauer group $\operatorname{Br}(K)$ is naturally isomorphic to the second Galois cohomology group $ H^2(G_K,K_{sep}^\times)$.
\end{theorem}

For specific fields one can even determine the Brauer group explicitly. The case of local fields is treated by the following famous result of Hasse.

\begin{proposition}[Hasse] \cite[Prop.\ 6.3.7]{gille2017central}\label{prop:Hasse}
Let $K$ be a complete discretely valued field with finite residue field. Then we have a canonical isomorphism $$\operatorname{Br}(K)\cong\mathbb{Q}/\mathbb{Z}\ .$$
Moreover for a finite separable extension $L/K$ there are commutative diagrams
\begin{align*}
\xymatrix{
\operatorname{Br}(L)\ar[r]^\cong \ar[d]_{\operatorname{Cor}} & \mathbb{Q}/\mathbb{Z}\ar[d]^{\operatorname{id}}\\
\operatorname{Br}(K)\ar[r]^\cong & \mathbb{Q}/\mathbb{Z}
}\quad\text{ and }\quad
\xymatrix{
\operatorname{Br}(K)\ar[r]^\cong \ar[d]_{\operatorname{Res}} & \mathbb{Q}/\mathbb{Z}\ar[d]^{\operatorname{|L\colon K|}}\\
\operatorname{Br}(L)\ar[r]^\cong & \mathbb{Q}/\mathbb{Z}\ ,
}
\end{align*}
where the right vertical map in the second diagram is the multiplication by the degree $|L\colon K|$.
\end{proposition}

The map inducing the isomorphism $\operatorname{Br}(K)\cong \mathbb{Q}/\mathbb{Z}$ is classically called the \textbf{Hasse invariant map}. Note that in the archimedean case Frobenius' Theorem on division rings over the real numbers $\mathbb{R}$ is equivalent to the fact $\operatorname{Br}(\mathbb{R})=\frac{1}{2}\mathbb{Z}/\mathbb{Z}\subset \mathbb{Q}/\mathbb{Z}$. Finally, since $\mathbb{C}$ is algebraically closed, we have $\operatorname{Br}(\mathbb{C})=0$.

Now let $K$ be a \emph{global field}, i.e\ either a number field (finite extension of $\mathbb{Q}$) or the function field $K=\mathbb{F}(C)$ of a smooth projective curve $C$ over a finite field $\mathbb{F}$. Denote by $\mathcal{P}$ the set of (finite and infinite) places of $K$, ie.\ in the function field case $\mathcal{P}$ is the set $C_0$ of closed points on $C$ and in the number field case $\mathcal{P}$ consists of the prime ideals in the ring of integers of $K$ and the set of equivalence classes of archimedean valuations on $K$. For a place $P\in\mathcal{P}$ we denote by $K_P$ the completion of $K$ at $P$. If $A$ is a central simple algebra over $K$ then $A_P:=A\otimes_K K_P$ is a central simple algebra over $K_P$. This induces a natural map $\operatorname{Br}(K)\to \operatorname{Br}(K_P)\overset{\operatorname{inv}_P}{\to} \mathbb{Q}/\mathbb{Z}$. Note that every central simple algebra $A$ splits at all but finitely many places, ie.\ we have $\operatorname{inv}_P([A_P])=0$ for all but finitely many $P$. Using the main results of class field theory one obtains the following classical theorem of Hasse.

\begin{theorem}[Hasse] \cite[Cor.\ 6.5.3, Rem.\ 6.5.5]{gille2017central} \label{localglobalbrauer} For any global field $K$ we have an exact sequence
$$
0\to \operatorname{Br}(K)\to \bigoplus_{P\in \mathcal{P}}\operatorname{Br}(K_P)\overset{\sum\operatorname{inv}_P}{\to}\mathbb{Q}/\mathbb{Z}\to 0\ .
$$
\end{theorem}
Note that the Hasse-invariant of a nonsplit quaternion algebra over a local field is $\frac{1}{2}$. In particular, any quaternion algebra $A$ over $K$ splits at an even number of places. Further, for any finite subset $S\subset \mathcal{P}$ of even cardinality there exists a unique quaternion algebra (upto isomorphism) over $K$ that splits exactly at the places in $\mathcal{P}\setminus S$. This is usually referred to as Hilbert's reciprocity law.

Finally, we briefly recall the definition and basic properties of orders in central simple algebras over local and global fields.
\begin{definition}
Let $R$ be a Dedekind domain and $K$ be its field of fractions. An $R$-order in a central simple algebra $A$ over $K$ is a subring $O$ in $A$ that is a finitely generated $R$-submodule in $A$ such that $K\cdot O=A$ (ie.\ $O$ is a full $R$-lattice in the $K$-vectorspace $A$). We call an order $O\subset A$ maximal if it is maximal with respect to inclusion.
\end{definition}

By the following result, being a maximal order is a local property.

\begin{theorem}\cite[Cor.\ 11.2]{reiner2003book}
An $R$-order $O$ in $A$ is maximal if and only if for each maximal ideal $P$ in $R$ the localization $O_P$ is a maximal $R_P$-order in $A$.
\end{theorem}

\subsection{The corestriction of a central simple algebra}\label{corestrictionsec}

Due to the fact that the Brauer group admits a cohomological interpretation, one can use standard techniques from Galois cohomology to analyze central simple algebras. Let $L$ be a finite Galois extension of $K$ (contained in the fixed separable closure $K_{sep}$). Let $G_K$ and $G_L$ be the absolute Galois group of $K$ and $L$ respectively. There are two standard maps to analyze: restriction, which is a map from $H^2(G_K,K_{sep}^\times)$ to $H^2(G_L,K_{sep}^\times)$ and corestriction which is a map from $H^2(G_L,K_{sep}^\times)$ to $H^2(G_K,K_{sep}^\times)$. 

For our purposes we need explicit descriptions of these maps on central simple algebras. The restriction map is easy, one just considers the extensions of scalars by $L$ (ie.\ the map $A\mapsto A\otimes_K L$). However the corestriction map is more complicated. We describe the corestriction map when $L$ is a separable quadratic extension of $K$ (which implies that it is a Galois extension). This discussion is taken from \cite[Section 3B]{knus1998book} (in that book the corestriction is called the norm of an algebra). 

Let $L$ be a separable quadratic extensions of a field $K$. Let $\sigma$ be a generator of $Gal(L/K)$. Let $A$ be a central simple algebra over $L$. Then we define $A^{\sigma}$ to be the algebra where each structure constant of $A$ is conjugated by $\sigma$. Alternatively, one can define $A^{\sigma}$ as a collection of elements $\{a^\sigma|~a\in A\}$ with the following properties:
$$a^{\sigma}+b^{\sigma}=(a+b)^{\sigma},~a^{\sigma}b^{\sigma}=(ab)^{\sigma}~, (\lambda\cdot a)^{\sigma}=\sigma(\lambda)a^{\sigma} .$$
$A^{\sigma}$ is also a central simple $L$-algebra and the induced map $\sigma$ provides a $K$-isomorphism between $A$ and $A^{\sigma}$. 

\begin{definition}\label{def:switch}
Let $L$ be a separable quadratic extension of $K$. Let $A$ be a central simple $L$-algebra. The switch map $s$ is the $K$-linear endomorphism of $A \otimes_L A^{\sigma}$ defined on elementary tensors by $s(a \otimes b^{\sigma}) = b \otimes a^{\sigma}$, extended K-linearly.

\end{definition}
\begin{proposition}\label{prop:cor}\cite[Proposition 3.13.]{knus1998book}
The elements of $A\otimes A^{\sigma}$ invariant under the switch map form a subalgebra which is a central simple algebra over $K$ of dimension $\dim_L(A)^2$ over $K$.
\end{proposition}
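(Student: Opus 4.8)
The plan is to read the switch map $s$ as \emph{Galois descent data} for the quadratic extension $L/K$ and then invoke descent. Write $m=\dim_L A$ and $C:=A\otimes_L A^\sigma$, so that $C$ is a central simple $L$-algebra of $L$-dimension $m^2$ (a tensor product of the central simple $L$-algebras $A$ and $A^\sigma$), hence $\dim_K C=2m^2$. The first step is to record the formal properties of $s$: it is $K$-linear by construction; it is $\sigma$-semilinear over $L$, i.e.\ $s(\lambda c)=\sigma(\lambda)s(c)$ for $\lambda\in L$ (because $(\lambda a)^\sigma=\sigma(\lambda)a^\sigma$); it is multiplicative, $s(cc')=s(c)s(c')$, which on elementary tensors reduces to the identity $a^\sigma b'^\sigma=(ab')^\sigma$; it fixes $1_C=1\otimes 1^\sigma$; and $s^2=\operatorname{id}$. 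Consequently $B:=\{c\in C:\ s(c)=c\}$ is a unital $K$-subalgebra of $C$; this already settles the ``subalgebra'' part of the statement, and it remains to identify $B$ up to isomorphism.

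Second, I would establish the descent isomorphism: the natural $L$-algebra map $\mu\colon B\otimes_K L\to C$, $c\otimes\lambda\mapsto\lambda c$, is bijective, so that $\dim_K B=\tfrac{1}{2}\dim_K C=m^2$. Surjectivity is an averaging argument: fix $\lambda\in L\setminus K$, so that $\sigma(\lambda)\neq\lambda$ because $L^{\langle\sigma\rangle}=K$; for any $c\in C$ both $c+s(c)$ and $\lambda c+\sigma(\lambda)s(c)$ lie in $B$, and $\sigma(\lambda)\bigl(c+s(c)\bigr)-\bigl(\lambda c+\sigma(\lambda)s(c)\bigr)=(\sigma(\lambda)-\lambda)c$, so $c\in L\cdot B=\operatorname{im}\mu$. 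On the other hand $\dim_K B\le\dim_L C$: in a shortest nontrivial $L$-linear relation among elements of $B$, normalise one coefficient to $1$; applying $s$ and subtracting produces a shorter relation (the normalised term cancels), which is therefore trivial, so every coefficient is $\sigma$-fixed, i.e.\ lies in $K$, contradicting $K$-linear independence. Now $2\dim_K B=\dim_K(B\otimes_K L)\ge\dim_K C=2\dim_L C$ together with $\dim_K B\le\dim_L C$ forces $\dim_K B=\dim_L C=m^2$ and hence $\mu$ bijective. I want to stress that this step is characteristic-free — in particular valid when $\chara K=2$ — as it uses only the nontriviality of $\sigma$ on $L\setminus K$ and never a division by $2$.

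Third, I would descend central simplicity along the isomorphism $\mu\colon B\otimes_K L\xrightarrow{\ \sim\ }C$. Since $C$ is central simple over $L$ we have $Z(C)=L$; as $Z(B)\otimes_K L$ embeds into $Z(B\otimes_K L)=L$ we get $\dim_K Z(B)=\dim_L\bigl(Z(B)\otimes_K L\bigr)\le 1$, and $K\subseteq Z(B)$ forces $Z(B)=K$. If $I\subseteq B$ is a nonzero two-sided ideal, then $I\otimes_K L$ is a nonzero two-sided ideal of $C$; simplicity of $C$ gives $I\otimes_K L=C$, hence $\dim_K I=\dim_K B$ and $I=B$. Thus $B$ is a central simple $K$-algebra with $\dim_K B=m^2=\dim_L(A)^2$, as claimed.

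The main obstacle is the descent isomorphism in the second paragraph — concretely, showing that the fixed algebra $B$ is ``as large as possible'' ($\dim_K B=m^2$) rather than merely some $K$-subalgebra of $C$; the two ingredients (stability of $K$-linear independence under extension of scalars to $L$, and surjectivity of $\mu$ by averaging) are short but both use the Galois hypothesis $L^{\langle\sigma\rangle}=K$ in an essential way. Everything else — the formal properties of $s$, and the transfer of centrality and simplicity through $\mu$ — is routine.
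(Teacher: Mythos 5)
Your proof is correct and complete. Note that the paper does not actually prove this proposition: it simply cites \cite[Proposition 3.13]{knus1998book}, so there is no in-paper argument to compare against. Your route — verifying that the switch map $s$ is a $\sigma$-semilinear algebra automorphism of $C=A\otimes_L A^\sigma$ with $s^2=\operatorname{id}$, and then running Galois descent for the degree-two extension $L/K$ — is essentially the standard argument behind the cited result. The two points you flag as the crux are handled properly: the averaging $c\mapsto\bigl(c+s(c),\ \lambda c+\sigma(\lambda)s(c)\bigr)$ gives surjectivity of $\mu\colon B\otimes_K L\to C$ without dividing by $2$ (so the argument survives $\chara K=2$, which matters for this paper), and the shortest-relation argument gives $\dim_K B\le\dim_L C$, which together pin down $\dim_K B=\dim_L(A)^2$ and make $\mu$ an isomorphism; centrality and simplicity then descend as you say. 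The only step you pass over silently is that $s$ is well defined on the tensor product \emph{over $L$}, i.e.\ that $s(\lambda a\otimes b^\sigma)=s(a\otimes\lambda b^\sigma)$; this is a one-line check using $(\mu b)^\sigma=\sigma(\mu)b^\sigma$ and $\sigma^2=\operatorname{id}$, and is worth recording since it is exactly where the hypothesis that $L/K$ is quadratic (so that $A\otimes_L A^\sigma$, rather than $A\otimes_K A^\sigma$, carries the switch) enters the definition.
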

The algebra in Proposition \ref{prop:cor} is called the \textbf{corestriction} of $A$ (with respect to the extension $L/K$). It corresponds to the corestriction map of Galois cohomology (and it is also true that $Cor\circ Res$ is multiplication by $n$ in the Brauer group of $K$ but we will not use this fact in this paper). Our main application of the corestriction maps concerns involutions of central simple algebras. Recall that an involution of the central simple algebra $A$ of the \emph{second kind} is an involution whose restriction to the center $L$ of $A$ is nontrivial. For an overview of involutions the reader is referred to \cite[Chapter 1, Section 1-3]{knus1998book}. The main result we use is the following:
\begin{theorem} \label{involutioncorestrictionsplit}
Let $L/K$ be a quadratic Galois extension and let $A$ be a central simple algebra over $L$. Then $A$ admits an involution of the second kind if and only if the corestriction of $A$ is split. 
\end{theorem}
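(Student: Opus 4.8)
The plan is to prove this via the cohomological interpretation of the Brauer group, since the statement is really a statement about $H^2$. Write $G = G_K$ and $H = G_L$, so that $[H\colon G]$... wait, $[G\colon H] = |L\colon K| = 2$. The class $[A] \in \operatorname{Br}(L) = H^2(H, K_{sep}^\times)$ determines $A$ up to Brauer equivalence, and the corestriction of $A$ represents $\operatorname{Cor}([A]) \in H^2(G, K_{sep}^\times) = \operatorname{Br}(K)$. So "the corestriction of $A$ is split" is equivalent to $\operatorname{Cor}([A]) = 0$. The key input from the theory of involutions (see Knus--Merkurjev--Rost--Tignol, Chapter I) is that a central simple $L$-algebra $A$ admits an involution of the second kind fixing $K$ if and only if $A$ is isomorphic to $A^\sigma{}^{\mathrm{op}}$ as an $L$-algebra, equivalently $[A^\sigma] = -[A]$ in $\operatorname{Br}(L)$; and since $\sigma$ acts on $\operatorname{Br}(L)$ and $[A^\sigma] = \sigma_*[A]$, this says $\sigma_*[A] + [A] = 0$.

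So the first step is to reduce the theorem to the following statement in Galois cohomology: for $[A] \in H^2(H, K_{sep}^\times)$, one has $\operatorname{Cor}([A]) = 0$ in $H^2(G, K_{sep}^\times)$ if and only if $\sigma_*[A] = -[A]$ in $H^2(H, K_{sep}^\times)$. The "only if" direction of the involution criterion is the easy one: if $\tau$ is an involution of the second kind on $A$, then $x \mapsto \tau(x)$ is a $\sigma$-semilinear anti-automorphism, so $A \cong A^\sigma{}^{\mathrm{op}}$ over $L$, giving $\sigma_*[A] = -[A]$. The "if" direction uses a Skolem--Noether-type argument: given an $L$-isomorphism $\varphi\colon A \to A^\sigma{}^{\mathrm{op}}$, one massages $\varphi$ (composing with the canonical $\sigma$-semilinear map $A \to A^\sigma$ and adjusting by an inner automorphism, using that $\varphi^\sigma \circ \varphi$ is inner) to obtain an actual involution; this is the classical argument of Albert. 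I would cite \cite{knus1998book} for this rather than reprove it.

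The heart of the matter is then the cohomological equivalence $\operatorname{Cor}([A]) = 0 \iff \sigma_*[A] + [A] = 0$. Here I would use the explicit description of corestriction for a quadratic extension together with the relation $\operatorname{Res} \circ \operatorname{Cor} = \operatorname{id} + \sigma_*$ on $H^2(H, K_{sep}^\times)$ (this is the standard Mackey/double-coset formula: $\operatorname{Res}^H_H \operatorname{Cor}^H_G = \sum_{g \in H\backslash G / H} \operatorname{Cor} \circ g_* \circ \operatorname{Res}$, which for $G/H$ of order $2$ gives $1 + \sigma_*$). Since $\operatorname{Res}\colon \operatorname{Br}(K) \to \operatorname{Br}(L)$ has kernel consisting of classes split by $L$, and a quadratic extension splits only $2$-torsion classes of a specific form, one gets: if $\sigma_*[A] + [A] = 0$ then $\operatorname{Res}(\operatorname{Cor}[A]) = 0$, so $\operatorname{Cor}[A]$ is split by $L/K$; the converse direction, that $\operatorname{Cor}[A]$ split forces $\sigma_*[A] = -[A]$, needs slightly more — one shows the class $[A] + \sigma_*[A]$ lies in the image of $\operatorname{Res}$ (it is $\operatorname{Res}(\operatorname{Cor}[A])$) and that an $L/K$-split class that is itself in the image of $\operatorname{Res}\circ\operatorname{Cor}$... . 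Cleanest is to invoke the exact sequence $H^2(G/H, L^\times) \to H^2(G, K_{sep}^\times) \to H^2(H, K_{sep}^\times)$ (inflation-restriction) together with $H^2(G/H, L^\times) = K^\times/N_{L/K}(L^\times)$ for the cyclic group $G/H$ of order $2$; then $\operatorname{Cor}[A] = 0$ iff $[A] + \sigma_*[A]$ comes from $K^\times/N_{L/K}(L^\times)$ via the connecting map, which after unwinding is exactly the condition that $A \otimes_L A^\sigma$ has a switch-invariant central simple $K$-subalgebra that is split.

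The step I expect to be the main obstacle is making the second paragraph's Skolem--Noether argument precise while keeping the exposition self-contained — in particular handling the ``correction by a scalar norm'' ($\varphi^\sigma \circ \varphi$ is inner, say conjugation by $u$, and one needs $u^\sigma u$ to be a suitable scalar, which it is only after rescaling $u$, using Hilbert 90) — but since the paper explicitly allows citing \cite{knus1998book} for the involution theory, I would lean on Chapter I, Theorem 3.1 there and confine the original work to spelling out the corestriction/$\operatorname{Cor}$ dictionary, which is exactly Proposition \ref{prop:cor} already established above. Thus the actual proof is short: combine Proposition \ref{prop:cor} (identifying the corestriction algebra with $\operatorname{Cor}[A]$), the classical involution criterion, and the Mackey relation $\operatorname{Res}\circ\operatorname{Cor} = 1 + \sigma_*$.
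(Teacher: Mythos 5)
There is a genuine gap, and it sits exactly where your argument is vaguest. Your entire reduction rests on the ``key input'' that $A$ admits an involution of the second kind if and only if $A\cong (A^\sigma)^{op}$, i.e.\ $\sigma_*[A]=-[A]$ in $\operatorname{Br}(L)$. That equivalence is false: the condition $\sigma_*[A]=-[A]$ is necessary but not sufficient, and the whole point of Albert's theorem (which is what the statement above is) is that the correct criterion is the strictly stronger vanishing of the corestriction. Concretely, take $K=\mathbb{Q}$, $L=\mathbb{Q}(i)$, and let $A$ be the quaternion algebra over $L$ ramified exactly at the two places lying over $3$ and $7$ (both inert in $L$). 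Then $A^\sigma\cong A\cong A^{op}$, so $\sigma_*[A]=-[A]$; but $\operatorname{Cor}_{L/K}[A]$ has local invariant $\tfrac12$ at $3$ and at $7$, hence is the nonsplit quaternion algebra $\left(\tfrac{\,\cdot\,,\,\cdot\,}{ }\right)$ over $\mathbb{Q}$ ramified at $\{3,7\}$, and $A$ admits no involution of the second kind. The same example refutes your proposed cohomological equivalence $\operatorname{Cor}[A]=0\iff [A]+\sigma_*[A]=0$: the Mackey relation $\operatorname{Res}\circ\operatorname{Cor}=1+\sigma_*$ only shows that $[A]+\sigma_*[A]=0$ is equivalent to $\operatorname{Cor}[A]\in\ker(\operatorname{Res})=\operatorname{Br}(L/K)\cong K^\times/N_{L/K}(L^\times)$, which is generally nonzero. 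Your penultimate paragraph, where the sentence about the converse trails off, is precisely the unfixable point; and in the Skolem--Noether step the scalar you want to absorb lives in $K^\times/N_{L/K}(L^\times)$, so Hilbert 90 does not dispose of it --- that residual class \emph{is} the corestriction obstruction.

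For comparison: the paper does not prove this theorem at all. It quotes it from \cite{knus1998book} (Albert's theorem) and only records the remark that the proof there is constructive, which is what matters for the algorithms; the constructive content actually used later is Lemma \ref{lem:ideal} (a right ideal $I$ of the corestriction $B$ with $A^\sigma\otimes_L A=I_L\oplus(1\otimes A)$ yields an involution $\tau_I$ of the second kind), and such an ideal is produced from a splitting of $B$. If you want to supply a proof rather than a citation, that ideal-theoretic construction --- not the Brauer-group identity you propose --- is the route to take.
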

The proof of this theorem in \cite{knus1998book} is constructive which we will exploit in later sections. 

\subsection{Corestriction of maximal orders}

For the purpose of optimising maximal order computation in the corestriction of a matrix algebra (see section \ref{sect:complexity} for details), we need to consider the corestriction construction over Galois extensions of rings. For the convenience of the reader, we recall the key points of this construction for our setting. The discussion is taken from \cite{ford2017book}.

The conceptual definition of a Galois extension of rings requires more machinery than is necessary for our purpose, so we quote as a definition the characterisation given by point (6) of \cite[Theorem 12.2.9]{ford2017book}:
\begin{definition}
Let $R$ be a commutative ring, and $S$ a commutative $R$-algebra. Let $G$ be a finite group of $R$-algebra automorphisms of $S$. Then $S$ is a Galois extension of $R$ with group $G$ if the following conditions are verified:
\begin{enumerate}
    \item $S^G = R$
    \item for each maximal ideal $\mathfrak{m}$ of $S$ and for each non trivial $\sigma \in G$, there is an $x \in S$ such that $\sigma(x)-x \notin \mathfrak{m}$.
\end{enumerate}
\end{definition}

\begin{proposition}\label{prop:quadgalois}
    Let $K$ be a global field and let $L$ be a separable quadratic extension of $K$ with Galois group $G = \{1,\sigma\}$. Let $R \subsetneq K$ be a Dedekind domain, and let $S$ be the integral closure of $R$ in $L$. Then, $S$ is a Galois extension of $R$ with group $G$ if and only if no prime ideal of $R$ is ramified in $L$.
\end{proposition}

\begin{proof}
Since $R = S \cap K$ it is clear that $R = S^G$. Now, we let $\mathfrak{P}$ be a prime ideal of $S$, lying above a prime $\mathfrak{p}$ in $R$. Then if $\mathfrak{p}$ does not ramify in $L$, either $\sigma(\mathfrak{P}) \neq \mathfrak{P}$ or $\sigma$ induces a non-trivial automorphism of the residue field of $\mathfrak{P}$. In both cases, we may find some $x \in S$ such that $\sigma(x) - x \notin \mathfrak{P}$. On the contrary, if $\mathfrak{p}$ ramifies in $S$, $\sigma(\mathfrak{P}) = \mathfrak{P}$ and $\sigma$ acts trivially on the residue field of $\mathfrak{P}$, so for all $x \in S$, $\sigma(x) - x \in \mathfrak{P}$.
\end{proof}

For the rest of the subsection, we keep the notations of proposition \ref{prop:quadgalois}. Let $A$ be a central simple algebra over $L$ and let $\mathcal{O}$ be a $S$-order in $A$. Then we call corestriction of $\mathcal{O}$ the intersection of $\mathcal{O} \otimes_S \mathcal{O}^\sigma$ and the corestriction of $A$. This construction corresponds to the more general construction given in \cite[subsection 14.1.3]{ford2017book} for a module over a Galois ring extension.

In order to show that in the Galois case, the corestriction of a maximal order in a matrix algebra is maximal order of the corestriction, we use theorems related to Azumaya algebras.
To simplify the exposition, we again use as a definition what is given in \cite{ford2017book} as a characterization. Combining theorem 7.1.4 (3) and corollary 1.1.16 (1), we get:
\begin{definition}
    Let $R$ be a commutative ring $R$. An Azumaya algebra over $R$ is a $R$-algebra $A$ that is finitely generated, projective and faithful as a $R$-module and such that the map $s: A \otimes_R A^{op} \rightarrow End_R(A)$ is an isomorphism, where $s$ is defined by $s(a \otimes b)(x) = axb$ for $a,b,x \in A$.
\end{definition}

We may now state the main result of this subsection:

\begin{proposition}\label{prop:coresunram}
    Let $A = M_n(L)$, and let $\mathcal{O}$ be a maximal $S$-order in $A$. Then the corestriction of $\mathcal{O}$ is a maximal order in the corestriction of $A$.
\end{proposition}

\begin{proof}
    Since $A$ and its corestriction are matrix algebras (respectively over $L$ and $K$), their maximal orders are Azumaya algebras (respectively over $R$ and $S$). This follows from \cite[theorem 11.3.14]{ford2017book}, since the Brauer class of a matrix algebra is trivial in the Brauer group of its base field. Furthermore, any $R$-order that is an Azumaya $R$-algebra is a maximal order in the corestriction of $A$. This is the content of \cite[theorem 11.3.11]{ford2017book}.
    
    Now, the result follows directly from \cite[theorem 14.1.9 (1)]{ford2017book}. Indeed, $S$ is free as a $R$-module so the theorem applies, and it states that the corestriction of an Azumaya $S$-algebra is an Azumaya $R$-algebra.
\end{proof}

The existing theory allows us to describe the intersection of $\mathcal{O} \otimes_S \mathcal{O}^\sigma$ and the corestriction of $A$ in the case that $S$ is an unramified extension of $R$. In the proof of proposition \ref{prop:coresram}, we discuss the situation at ramified primes using an explicit computation.

\subsection{Algorithmic preliminaries}
In this subsection we give a brief overview of known algorithmic results in this context and provide more details of the algorithms specifically used in this paper. 

Let $K$ be a field and let $A$ be an associative algebra given by the following presentation. One is given a $K$-basis $b_1,\dots b_m$ of $A$ and a multiplication table of the basis elements, i.e.\ $b_ib_j$ expressed as a linear combination $\sum_{k=1}^m \gamma_{i,j,k} b_k$. These $\gamma_{i,j,k}$ are called structure constants and we consider our algebra given by structure constants. 
 It is a natural algorithmic problem to compute the structure of $A$, i.e., compute its Jacobson radical $\operatorname{rad} A$, compute the Wedderburn decomposition of $A/\operatorname{rad} A$ and finally compute an explicit isomorphism between the simple components of $A/\operatorname{rad} A$ and $M_n(D_i)$ where the $D_i$ are division algebras over $K$ and $M_n(D_i)$ denotes the algebra of $n\times n$ matrices over $D_i$. The problem has been studied for various fields $K$, including finite fields, the field of complex and real numbers, global function fields and algebraic number fields. There exists a polynomial-time algorithm for computing the radical of $A$ over any computable field \cite{cohen1997finding}. There also exist efficient algorithms for every task over finite fields \cite{friedl1985polynomial},\cite{ronyai1990computing} and the field of real and complex numbers \cite{eberly1991decompositions}. Finally, when $K=\mathbb{F}_q(t)$, the field of rational functions over a finite field $\mathbb{F}_q$, then there exist efficient algorithms for computing Wedderburn decompositions \cite{ivanyos1994decomposition}. 
 
 This motivates the algorithmic study of computing isomorphisms between simple algebras. Over finite fields every simple algebra is a full matrix algebra. Finding isomorphisms between full matrix algebras can be accomplished in polynomial time using the results from \cite{friedl1985polynomial} and \cite{ronyai1990computing}. Now we turn our attention to global fields. 
 
 \subsection{Number fields}
 
 Over number fields there is an immediate obstacle. Rónyai \cite{ronyai1987simple} showed that this task is at least as hard as factoring integers. However, in most interesting applications factoring is feasible, thus it is a natural question to ask whether such an isomorphism can be computed if one is allowed to call an oracle for factoring integers. In \cite{ivanyos2012splitting} the authors propose such an algorithm when $A\cong M_n(K)$ where $K$ is a number field. We sketch the steps of the algorithm here in the $K=\mathbb{Q}$ case: 
 \begin{enumerate}
     \item Compute a maximal order $O$ in $A$
     \item Embed $A$ into $M_n(\mathbb{R})$ to obtain a norm on $A$
     \item Find a reduced basis $b_1,\dots,b_{n^2}$ of $O$
     \item Search through all the elements of small norm and check whether they are of rank 1
     \item A rank one element generates a minimal left ideal, the action of $A$ on the minimal left ideal provides an explicit isomorphism between $A$ and $M_n(\mathbb{Q})$
\end{enumerate}
\begin{remark}
If one of the $b_i$ is a zero divisor (which should happen very rarely), then one can reduce the entire problem to a smaller $n$ and restart the algorithm. It can also be shown that when $n$ is small, then this never occurs \cite{ivanyos2013improved}.
\end{remark}

The key technical result of \cite{ivanyos2012splitting} is that the search step can be bounded by a number which only depends on $n$ in the rational case. When $K$ is a number field then a similar algorithm can be used (with some extra technical lemmas, accounting for the fact that not all maximal orders are conjugates when the class number of $K$ is greater than 1). In that case the bound in the search step also depends on the degree and the discriminant of $K$. Further more, the bound is exponential in all the parameters ($n$, the degree and the discriminant of $K$). This implies that the algorithm is not a polynomial-time algorithm even in the case when $n=2$ and $K$ is a quadratic number field. 

In \cite{kutas2019splitting} a polynomial-time algorithm (modulo factoring integers) is proposed for the $n=2$ case when $K$ is a quadratic field. The key idea is to find a subalgebra in $A$ which is a quaternion algebra $B$ over $\mathbb{Q}$. Finding $B$ boils down to finding nontrivial solutions to quadratic forms in 3 and 6 variables. If $B$ is split, then one can find a zero divisor in $B$ efficiently. Otherwise, $B$ is a division algebra which is split by $K$. Finding a subfield isomorphic to $K$ in $B$ can be accomplished by finding a zero of a quadratic form in 4 variables. 

It is a natural question how the problem of finding an isomorphism between $A$ and $M_n(K)$ relates to finding isomorphisms between central simple $K$-algebras given by structure constants. In \cite{ivanyos2012splitting} the authors propose a method where they reduce the isomorphism problem of $A_1$ and $A_2$ to finding an isomorphism between $A_1\otimes A_2^{op}$ and $M_{n^2}(K)$. The reduction works for any computable infinite field. The idea is to find an irreducible $A_1\otimes A_2^{op}$-module $V$ which as left $A_1$-module is isomorphic to the regular representation of $A_1$ (with isomorphism $\phi_1$) and as a right $A_2^{op}$-module is isomorphic to the regular representation of $A_2^{op}$ (with isomoprhism $\phi_2$). Then one can show that $\phi_1^{-1}\circ\phi_2$ is an algebra isomorphism between $A_1$ and $A_2$.

\subsection{Function fields}

Let $K=\mathbb{F}_q(t)$ where $q$ is a prime power (which can be even in this case). First we recall the main algorithm from \cite{ivanyos2018computing} which computes an explicit isomorphism between a $M_n(\mathbb{F}_q(t))$ and an algebra $A$ given by structure constants. The key idea of the algorithm is similar to the previously described number field algorithm: find a maximal order in $A$ and try to prove that there exists a short primitive idempotent. The key observation here is that in $M_n(\mathbb{F}_q(t))$ the natural norm is non-archimedean thus matrices of norm smaller than one in any maximal order form a ring. In the number field case small elements have no structure and thus one has to do an exhaustive search to find primitive idempotents. In the function field case one can exploit this extra structure. We sketch the algorithm here: 
\begin{enumerate}
    \item Compute a maximal $\mathbb{F}_q[t]$-order $O_1$ in $A$
    \item Compute a maximal $R$-order $O_2$ in $A$ where $R$ is the subring of $\mathbb{F}_q(t)$ consisting of rational functions where the degree of the denominator is at least the degree of the numerator (i.e., the valuation ring with respect to the degree valuation)
    \item Compute the intersection $B$ of $O_1$ and $O_2$ using lattice reduction
    \item Find a complete orthogonal system of primitive idempotents in $B$, one of them will be a primitive idempotent in $A$, as well
\end{enumerate}
\begin{remark}
Elements of $O_2$ correspond to "short" elements of $A$. 
\end{remark}
In contrast to the number field case, this algorithm is polynomial in $n$ and $\log q$ due to the fact that there is no exhaustive search step at the end. This algorithm can also be used to find explicit isomorphisms between central simple $\mathbb{F}_q(t)$-algebras due to the observation described in the previous subsection. 
When $K$ is a finite extension of $\mathbb{F}_q(t)$, then the only known case is the case of separable quadratic extensions. When $q$ is odd, then \cite{ivanyos2019explicit} proposes a polynomial-time algorithm for finding zero divisors in split quaternion algebras over $K$ using a similar technique to the ones developed in \cite{kutas2019splitting}. When $q$ is even, then an analogous polynomial-time algorithm is presented in \cite{csahok2022finding}.

In \cite{gomez2020primitive} the problem of finding primitive idempotents in $A$ isomorphic to $M_n(D)$ is studied, where $D$ is a division algebra over $\mathbb{F}_q(t)$. This does not follow immediately from the previously described algorithm. The main observation is that it is enough to construct the division algebra $D$ Brauer equivalent to $A$ as then a primitive idempotent can be constructed easily. Constructing $D$ is accomplished in the following fashion: one computes the Hasse invariants of $A$ and then one constructs a division algebra with those exact Hasse invariants. When $D$ is a quaternion algebra, then this is equivalent to constructing a quaternion algebra that ramifies at specific places. In \cite{gomez2020primitive} there is a polynomial-time algorithm for constructing quaternion algebras with prescribed ramification whenever $q$ is odd. The case where $q$ is even is handled in \cite{csahok2022finding}.  


So far it is not clear, why these algorithms fail for arbitrary function fields. The reason it does not work in general is that $B$ which is the intersection of two maximal orders might just be a one-dimensional $\mathbb{F}_q$-vector space without any zero divisors again due to the fact the class number might be larger than 1.

We emphasize that some of the previously mentioned algorithms (e.g., the main algorithm from \cite{ivanyos2018computing}) have not been implemented and have no precise complexity estimate (beyond running in polynomial time). In this work we provide an implementation of \cite{ivanyos2018computing} and analyze the complexity of certain subroutines (such as maximal order computation) in more detail. 
\section{The descent method}

Let $K$ be a field and let $L$ be a separable quadratic extension of $K$. Let $A$ be a central simple algebra over $L$ given by structure constants. Our goal in this section is to find a subalgebra of $A$ which is a central simple algebra over $K$. In other words, we would like to decompose $A$ as a tensor product $B\otimes_{K}L$ when this is possible. Our main technical tool is an algorithm that computes the corestriction of a central simple algebra. We apply this in section \ref{galoisdescent} to explicit Galois descent in case of quadratic extensions.

\subsection{Explicit Galois descent}\label{galoisdescent}

Our first step is to construct an involution of the second kind on $A$ if such an involution exists. The following lemma \cite[Theorem 3.17.]{knus1998book} provides a useful relationship between certain right ideals of the corestriction of $A$ and involutions of the second kind:
\begin{lemma}\label{lem:ideal}
Let $A$ be a central simple algebra over $L$ of dimension $n^2$ where $L$ is a separable quadratic extension of the field $K$. Put $B$ for the corestriction of $A$ with respect to $L/K$. Assume that there exists a right ideal $I$ of $B$ such that  $A^{\sigma}\otimes_LA=I_L\oplus (1\otimes A)$ where $I_L=I\otimes_K L$. Then $A$ admits an involution of the second kind.
\end{lemma}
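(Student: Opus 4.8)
The plan is to exhibit an explicit involution of the second kind on $A$ directly from the decomposition $A^\sigma\otimes_L A = I_L\oplus(1\otimes A)$. First I would recall the standard fact (from the structure theory of central simple algebras, as in \cite{knus1998book}) that if $B$ is a central simple $K$-algebra of dimension $n^4$ over $K$ and $I$ is a right ideal of $B$, then $I_L = I\otimes_K L$ is a right ideal of $B_L = B\otimes_K L$, and $B_L$ is exactly the algebra $A^\sigma\otimes_L A$ (this is Proposition~\ref{prop:cor}: $B$ is the corestriction, so $B_L\cong A^\sigma\otimes_L A$ after base change, since the switch-invariant subalgebra tensored back up with $L$ recovers the whole tensor product). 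The hypothesis says this right ideal $I_L$ is a complement to the subalgebra $1\otimes A \cong A$ inside $A^\sigma\otimes_L A$.

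**The key construction.**

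The next step is the heart of the argument. Because $1\otimes A$ is a $K$-subalgebra of $B_L$ isomorphic to $A$, and because $A^\sigma \otimes_L A = I_L \oplus (1\otimes A)$ as a direct sum of a right ideal and a subalgebra, I would argue that $1\otimes A$ acts on the right on the quotient $B_L/I_L$, and via the complement this gives an embedding of $A$ (equivalently $A^{op}$, depending on bookkeeping of the left/right action) into $\operatorname{End}$ of the appropriate module. The crucial point: the left multiplication action of $A^\sigma$ on $A^\sigma\otimes_L A$ descends to $B_L/I_L \cong 1\otimes A$, producing a homomorphism $A^\sigma \to \operatorname{End}_{A}(1\otimes A)$; but $\operatorname{End}$ of the regular right $A$-module is $A^{op}$ (or $A$), and $A^\sigma$ is $K$-isomorphic to $A$ via $\sigma$. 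Composing these identifications yields a $K$-algebra map $A^\sigma \to A^{op}$, i.e.\ a $K$-algebra map $A\to A^{op}$ that is $\sigma$-semilinear over $L$; since it is a map of central simple $L$-algebras of the same dimension it is an isomorphism, and one checks its square is the identity (after possibly adjusting by an automorphism, using Skolem--Noether). That is precisely an involution of the second kind on $A$.

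**What I expect to be the obstacle.**

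The main obstacle is the bookkeeping around the switch map and making the semilinearity come out right: one must be careful that the projection $A^\sigma\otimes_L A \to 1\otimes A$ along $I_L$ is a map of right $A$-modules (with $A$ acting on the second factor), and that the resulting map on $A^\sigma$ is genuinely $\sigma$-semilinear rather than $L$-linear — this is where the quadratic extension and the $\sigma$-twist in $A^\sigma$ enter, and it is what forces the involution to be of the \emph{second} kind rather than the first. A secondary subtlety is verifying the involutive property: a priori one only gets an anti-automorphism $\tau$ with $\tau^2$ an inner automorphism restricting to the identity on $L$, and one invokes a standard argument (again Skolem--Noether, or the explicit normalization in \cite[Theorem 3.17]{knus1998book}) to correct $\tau$ to an honest involution. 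I would present the construction of $\tau$ in detail and then cite the reduction of $\tau^2$ to an involution as the routine final step.

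**Remark on alternative.** Alternatively, one can bypass the direct construction: the hypothesis forces the corestriction $B$ to be split, since a central simple $K$-algebra possessing a right ideal $I$ with $B_L = I_L\oplus(1\otimes A)$ and $\dim_K(1\otimes A) = n^2 = \sqrt{\dim_K B}$ has a right ideal of reduced dimension $n$, hence is isomorphic to $M_n(D)$ with $D$ of degree dividing $n$; a short index computation (the complement $1\otimes A\cong A$ has index dividing that of $B$ but dimension forcing equality of indices up to the split factor) shows $B\cong M_{n^2}(K)$. Then Theorem~\ref{involutioncorestrictionsplit} applies directly. I would likely run the direct construction since it is the one needed constructively in later sections, but flag this shortcut.
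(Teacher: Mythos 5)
Your target construction is the same as the paper's: the paper (quoting \cite[Theorem 3.17]{knus1998book}) defines $\tau_I(a)$ as the unique element with $a^{\sigma}\otimes 1-1\otimes \tau_I(a)\in I_L$, which is exactly the projection of $a^\sigma\otimes 1$ onto $1\otimes A$ along $I_L$ that you describe. The gap is in the mechanism you give for multiplicativity. You assert that left multiplication by $A^\sigma\otimes 1$ ``descends to $B_L/I_L$''; but $I_L$ is only a \emph{right} ideal, so left multiplication by a general element need not preserve $I_L$ (the endomorphism ring of the cyclic right module $B_L/I_L$ is the idealizer of $I_L$ modulo $I_L$, not all of $B_L$), and the composite ``include $1\otimes A$, left-multiply, project along $I_L$'' is therefore not a priori compatible with composition --- which is precisely the anti-multiplicativity you need. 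The step can be repaired, but only by the direct computation that combines the right-ideal property with the commutation of the two tensor factors: writing $a^\sigma\otimes 1=i_a+1\otimes\tau(a)$ and $b^\sigma\otimes 1=i_b+1\otimes\tau(b)$ with $i_a,i_b\in I_L$, one has $(ab)^\sigma\otimes 1=(a^\sigma\otimes 1)(b^\sigma\otimes 1)=i_a(b^\sigma\otimes 1)+(b^\sigma\otimes 1)(1\otimes\tau(a))=i_a(b^\sigma\otimes 1)+i_b(1\otimes\tau(a))+1\otimes\tau(b)\tau(a)$, and the first two terms lie in $I_L$ because $I_L$ is a right ideal; hence $\tau(ab)=\tau(b)\tau(a)$. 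Your write-up skips over exactly the point where the hypothesis ``right ideal'' is consumed.

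A second, related issue: you never use the hypothesis that $I$ is a right ideal of the corestriction $B$ itself (rather than an arbitrary complementary right ideal of $A^\sigma\otimes_L A$). That hypothesis is what makes $\tau_I$ an involution on the nose: since $I_L=I\otimes_K L$ is stable under the ($K$-linear) switch map $s$, applying $s$ to $a^\sigma\otimes 1-1\otimes\tau_I(a)\in I_L$ gives $1\otimes a-\tau_I(a)^\sigma\otimes 1\in I_L$, i.e.\ $\tau_I(\tau_I(a))=a$ directly, with no Skolem--Noether correction. Your fallback (correct an anti-automorphism of the second kind to an involution) does suffice for the bare existence claim, but it discards the hypothesis that is doing the work and loses the explicit $\tau_I$ that Algorithm \ref{alg:inv} relies on. Your ``alternative'' remark is fine in spirit (the dimension count does force $B$ split, though the reduced dimension of $I$ is $n^2-1$, not $n$), but it is circular in context, since Theorem \ref{involutioncorestrictionsplit} is proved in \cite{knus1998book} via this very lemma.
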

\begin{proof}
We sketch the proof here. For each $a\in A$ there exists a unique element $\tau_I(a)\in A$ such that 
$$a^{\sigma}\otimes 1-1\otimes \tau_I(a)\in I_L .$$
One can check that the map $a\mapsto \tau_I(a)$ is indeed an involution of the second kind on $A$. 
\end{proof}
Now we propose an algorithm which either returns an involution of the second kind, or a zero divisor of $A$:
\begin{algor}\label{alg:inv}
Let $L$ be a separable quadratic extension of a field $K$. Let $A$ be a central simple algebra over $L$ of dimension $n^2$ which admits an involution of the second kind.
\begin{enumerate}
    \item Compute a maximal right ideal $I$ in $B$.
    \item Let $I_L=I\otimes L$ be the scalar extension of $I$ in $A^{\sigma}\otimes A$. Compute the intersection of $I_L$ and $1\otimes A$.
    \item If $I_L \cap 1\otimes A$ is nontrivial, then we have computed a zero divisor in $A$, since every element in $I_L$ is a zero divisor.
    \item If $I_L \cap 1\otimes A$ is trivial, then $I$ is a right ideal with the property that $A^{\sigma}\otimes_LA=I_L\oplus (1 \otimes A)$ by dimension considerations which allows us to construct an involution of the second kind. 
\end{enumerate}
\end{algor}
\begin{theorem}\label{thm:inv}
Let $L$ be a separable quadratic extension of a field $K$. Let $A$ be a central simple algebra over $L$ of dimension $n^2$ which admits an involution of the second kind. Suppose that one is allowed to call an oracle for computing maximal right ideals in algebras given by structure constants which are isomorphic to $M_{n^2}(K)$ (the cost of the call is the size of the input). Then Algorithm \ref{alg:inv} runs in polynomial time. 
\end{theorem}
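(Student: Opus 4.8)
The plan is to verify that each of the four steps of Algorithm \ref{alg:inv} can be carried out in time polynomial in the input size together with one oracle call, and that the output is in both cases correct.

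First I would treat the construction of the corestriction $B$. Given the structure constants of $A$ over $L$, the algebra $A^\sigma$ is obtained by applying $\sigma$ to each structure constant, and $A^\sigma\otimes_L A$ has structure constants that are products of those of $A$ and $A^\sigma$, all of polynomial size. On the basis $\{b_j\otimes b_i^\sigma\}$ the switch map $s$ is, up to relabelling, a permutation of basis vectors, hence $K$-linear and trivially computable; the fixed subalgebra $B$ is the kernel of $s-\operatorname{id}$ viewed as a $K$-linear endomorphism of the $2n^4$-dimensional $K$-vector space $A^\sigma\otimes_L A$, so a $K$-basis of $B$, together with its structure constants read off from the multiplication in $A^\sigma\otimes_L A$, is produced by Gaussian elimination. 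By Proposition \ref{prop:cor} we have $\dim_K B=n^4$, and since $s$ is $\sigma$-semilinear with $s^2=\operatorname{id}$, ordinary Galois descent gives a canonical isomorphism $B\otimes_K L\cong A^\sigma\otimes_L A$.

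Next, because $A$ is assumed to admit an involution of the second kind, Theorem \ref{involutioncorestrictionsplit} tells us that $B$ is split; as $\dim_K B=n^4=(n^2)^2$, this forces $B\cong M_{n^2}(K)$, so the oracle applies to $B$ and returns, at polynomial cost, a maximal right ideal $I$. In $M_{n^2}(K)$ the right regular module is a direct sum of $n^2$ copies of the simple right module $K^{n^2}$, so a maximal right ideal has $K$-dimension $(n^2-1)n^2=n^4-n^2$; hence $I_L:=I\otimes_K L$ is a right ideal of $A^\sigma\otimes_L A=B\otimes_K L$ of $L$-dimension $n^4-n^2$. Computing a basis of $I_L$ inside $A^\sigma\otimes_L A$ and then the intersection $I_L\cap(1\otimes A)$ is linear algebra over $L$, hence polynomial.

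Finally I would check the two branches. If $I_L\cap(1\otimes A)$ contains a nonzero element $x=1\otimes a$, then since $A^\sigma\otimes_L A$ is split, hence simple, the proper right ideal $I_L$ contains no units (a unit $u\in I_L$ would give $1=u u^{-1}\in I_L$); thus $x$ is a nonzero non-invertible element, which forces $a$ to be a zero divisor of $A$ (were $a$ invertible, so would be $1\otimes a$), and $a$ is read off from the coordinates of $x$ in the basis $\{1\otimes b_i\}$. Otherwise $I_L\cap(1\otimes A)=0$, and since $\dim_L I_L+\dim_L(1\otimes A)=(n^4-n^2)+n^2=n^4=\dim_L(A^\sigma\otimes_L A)$, we obtain $A^\sigma\otimes_L A=I_L\oplus(1\otimes A)$; the recipe in the proof of Lemma \ref{lem:ideal} then produces the involution by solving, for each basis vector $b_i$ of $A$, the linear system expressing that $b_i^\sigma\otimes 1-1\otimes\tau_I(b_i)$ lies in $I_L$ — again polynomial linear algebra over $L$. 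The one point that really needs care is the reduction to the oracle: one must know that the corestriction is genuinely (not merely Brauer-equivalently) isomorphic to $M_{n^2}(K)$, which is exactly where the dimension count $\dim_K B=n^4$ combined with Theorem \ref{involutioncorestrictionsplit} is used; the rest is bookkeeping with structure constants and solving linear systems.
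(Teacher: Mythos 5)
Your proof is correct and follows essentially the same route as the paper: invoke Theorem \ref{involutioncorestrictionsplit} to conclude the corestriction is split so the oracle applies, reduce Steps 2--4 to linear algebra over $L$, and appeal to Lemma \ref{lem:ideal} for correctness of the involution branch. The paper's own proof is only a sketch; you supply the details (the dimension counts, the construction of $B$ as the fixed subalgebra of the switch map, and the argument that a nonzero element of $I_L\cap(1\otimes A)$ yields a zero divisor), all of which check out.
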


\begin{proof}
Let $B$ be the corestriction of $A$. Our assumptions together with Theorem \ref{involutioncorestrictionsplit} imply that $B$ is split. Thus the correctness of Algorithm \ref{alg:inv} follows from Lemma \ref{lem:ideal}. 

Now we discuss the complexity of the steps of the algorithm. Computing a right ideal is a subroutine required by the statement of the Theorem, thus Step 1 can be carried out in polynomial time. Step 2 computes the intersection of two $L$-subspaces which can be accomplished by solving a system of linear equations over $L$. Finally, the last step runs in polynomial time by Lemma \ref{lem:ideal}.  
\end{proof}

The above proof is particularly interesting when one is looking for zero divisors in quaternion algebras. 

\begin{proposition}\label{prop:subalg}
Let $L$ be a separable quadratic extension of $K$ and suppose we know an algorithm for finding explicit isomorphisms between degree 4 split central simple algebras given by structure constants and $M_4(K)$. Let $A$ be a quaternion algebra over $L$. Then one can find a quaternion subalgebra of $A$ over $K$ in polynomial time. 
\end{proposition}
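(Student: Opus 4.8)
The plan is to apply the descent machinery of Section~\ref{galoisdescent} with $n=2$ and then exploit the \emph{canonical} involution of the first kind carried by any quaternion algebra. First I would note that the hypothesis already provides the oracle required by Theorem~\ref{thm:inv}: given a split central simple $K$-algebra $C$ of degree $4$ together with an explicit isomorphism $\phi\colon C\xrightarrow{\ \sim\ }M_4(K)$, one reads off a maximal right ideal of $C$ as $\phi^{-1}$ of the right ideal of $M_4(K)$ whose elements have zero last three columns. Then I would compute the corestriction $B:=\operatorname{cor}_{L/K}(A)$ explicitly, following Proposition~\ref{prop:cor}: build $A^\sigma$ by conjugating the structure constants of $A$ by $\sigma$, form $A\otimes_L A^\sigma$, cut out the fixed subspace of the switch map $s$ by solving a linear system, and restrict the multiplication; this yields a $16$-dimensional central simple $K$-algebra, and all of it is linear algebra over $K$ and $L$.

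Next I would dispose of the obstructed case. If $A$ possessed a quaternion $K$-subalgebra $B_0$ then, comparing $L$-dimensions, the natural map $B_0\otimes_K L\to A$ would be an isomorphism, so the canonical involution of $B_0$ tensored with $\sigma$ would be an involution of the second kind on $A$; by Theorem~\ref{involutioncorestrictionsplit} this forces $B$ to be split. Hence if $B$ is \emph{not} split the algorithm reports that no quaternion $K$-subalgebra exists, and non-splitness is detectable, e.g.\ by running the given oracle on $B$ and verifying the returned map on the multiplication table. From now on assume $B$ is split, so in particular $A$ admits an involution of the second kind.

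Now I would run Algorithm~\ref{alg:inv} with $n=2$; by Theorem~\ref{thm:inv} it terminates in polynomial time and outputs either a zero divisor $z\in A$ or an involution $\tau$ of the second kind on $A$. In the first case $A$ is not a division algebra, hence $A\cong M_2(L)$, and from $z$ one computes such a Wedderburn isomorphism by the standard linear-algebra procedure; the preimage of $M_2(K)\subseteq M_2(L)$ is then a (split) quaternion $K$-subalgebra, which I would output. In the second case, let $\gamma$ be the canonical involution of the quaternion algebra $A$: for $a\in A\setminus L$ one has a reduced characteristic equation $a^2-t\,a+n=0$ with $t,n\in L$, and $\gamma(a)=t-a$, so $\gamma$ is computed from the structure constants by solving small linear systems. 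Put $\rho:=\gamma\circ\tau$. Being a composite of two anti-automorphisms, $\rho$ is a $K$-algebra automorphism, and as $\gamma$ fixes $L$ while $\tau$ restricts to $\sigma$, it is $\sigma$-semilinear. The key point is that $\tau\gamma\tau^{-1}$ is again an involution of the first kind on $A$ with $(\tau\gamma\tau^{-1})(x)\,x\in L$ for all $x$ (the defining property of $\gamma$ is transported by $\tau$, using that the reduced norm is $L$-valued), so by uniqueness of the canonical involution on a quaternion algebra $\tau\gamma\tau^{-1}=\gamma$; hence $\gamma$ and $\tau$ commute and $\rho^2=\gamma\,(\tau\gamma\tau^{-1})\,\tau^2=\gamma\cdot\gamma\cdot\operatorname{id}=\operatorname{id}$. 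Thus $\rho$ is an order-$2$ $\sigma$-semilinear automorphism of $A$, and by Galois descent along $L/K$ the $K$-subalgebra $B_0:=\{x\in A:\rho(x)=x\}$ satisfies $B_0\otimes_K L\cong A$ (via $b\otimes\ell\mapsto b\ell$); in particular $B_0$ is a $4$-dimensional central simple $K$-algebra, i.e.\ a quaternion $K$-subalgebra of $A$. I would then output a $K$-basis and structure constants of $B_0$, obtained by solving the $K$-linear system $\rho(x)=x$.

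Every step is polynomial time: the corestriction, the fixed spaces, the intersection computed inside Algorithm~\ref{alg:inv}, and the Wedderburn splitting in the zero-divisor case are all linear algebra, and the single call to Algorithm~\ref{alg:inv} is polynomial by Theorem~\ref{thm:inv}. The one genuinely non-formal point is the identity $\tau\gamma\tau^{-1}=\gamma$: this is what guarantees that $\rho=\gamma\tau$ is an \emph{honest} involutive semilinear automorphism, so that no Hilbert~90 correction is needed and the fixed ring $A^\rho$ really is a $K$-form of $A$. I would also take care that the description of $\gamma$ via the reduced characteristic polynomial, its uniqueness, and the descent decomposition $A=B_0\oplus\theta B_0$ for a generator $\theta$ of $L/K$ are all phrased so as to remain valid in characteristic $2$.
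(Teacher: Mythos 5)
Your proposal is correct and follows essentially the same route as the paper: run Algorithm~\ref{alg:inv} to get either a zero divisor (whence $A\cong M_2(L)$ and one takes the preimage of $M_2(K)$) or an involution $\tau$ of the second kind, then compose $\tau$ with the canonical involution and take fixed points. The paper's proof is terser; your verification that $\tau\gamma\tau^{-1}=\gamma$ (hence $\rho^2=\operatorname{id}$) and your treatment of the non-split corestriction case are details the paper leaves implicit, not a different method.
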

\begin{proof}
Algorithm \ref{alg:inv} returns either a zero divisor or an involution of the second kind on $A$. If it returns a zero divisor, then one can efficiently construct an explicit isomorphism between $A$ and $M_2(L)$ which provides a subalgebra isomorphic to $M_2(K)$. If Algorithm \ref{alg:inv} returns an involution of the second kind, then one can compose that with the canonical involution (conjugation) on $A$. Then the fixed points of this map form a quaternion subalgebra over $K$. 
\end{proof}

When $L$ is a quadratic extension of $K=\mathbb{Q}$ or $K=\mathbb{F}_q(t)$, then there already existed efficient algorithms for computing quaternion subalgebras over $K$ in quaternion algebras over $L$ (\cite[Corollary 19]{kutas2019splitting}, \cite[Proposition 42]{ivanyos2019explicit}) using explicit calculations and utilizing algorithms for finding nontrivial zeros of quadratic form. Proposition \ref{prop:subalg} shows a more conceptual method for computing subalgebras which avoids tedious calculations. Furthermore, this proposition applies to quaternion algebras in characteristic 2 as well. 
\begin{corollary}\label{cor:char2}
Let $L$ be a separable quadratic extension of $K=\mathbb{F}_{2^k}(t)$ and $A$ be a quaternion algebra over $L$. There exists a polynomial-time algorithm which computes a quaternion subalgebra over $K$ of $A$ if such a quaternion algebra exists. 
\end{corollary}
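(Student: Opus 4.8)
The plan is to deduce Corollary~\ref{cor:char2} from Proposition~\ref{prop:subalg}. That proposition reduces the task to having one subroutine in hand: a polynomial-time algorithm that, given by structure constants a degree-$4$ split central simple algebra over $K=\mathbb{F}_{2^k}(t)$, produces an explicit isomorphism with $M_4(K)$ --- equivalently, in the language of Theorem~\ref{thm:inv}, a maximal right ideal of such an algebra, obtained by transporting a standard maximal right ideal of $M_4(K)$ back through the isomorphism. So the first and essentially only substantive step is to exhibit this subroutine for $q=2^k$.

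For that I would invoke the algorithm of \cite{ivanyos2018computing}, which computes an explicit isomorphism between $M_n(\mathbb{F}_q(t))$ and a split central simple $\mathbb{F}_q(t)$-algebra given by structure constants, in time polynomial in $n$ and $\log q$. The point to verify is that nothing in its correctness or complexity analysis uses that $q$ is odd: the construction of a maximal $\mathbb{F}_q[t]$-order and of a maximal order at the degree valuation, their lattice intersection, and the extraction of a complete orthogonal system of primitive idempotents all go through verbatim in characteristic $2$ (the text already notes that the related ternary-form and zero-divisor results of \cite{ivanyos2018computing} are valid in even characteristic). Specialising to $n=4$, $q=2^k$ gives exactly the subroutine demanded by Proposition~\ref{prop:subalg}. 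Running Algorithm~\ref{alg:inv} with this subroutine then returns, in polynomial time, either a zero divisor of $A$ --- whence $A\cong M_2(L)$ and $M_2(K)\subset A$ is the desired quaternion $K$-subalgebra --- or an involution $\tau$ of the second kind on $A$, in which case the fixed subring of $\tau\circ(x\mapsto\bar x)$, where $x\mapsto\bar x$ is the canonical involution of the quaternion algebra $A$, is a quaternion algebra over $K$.

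It remains to address the clause ``if such a quaternion algebra exists'', since Algorithm~\ref{alg:inv} presupposes that $A$ admits an involution of the second kind. I would therefore first form the corestriction $B=\operatorname{Cor}_{L/K}(A)$, which is explicit by Proposition~\ref{prop:cor} (the switch-invariant subalgebra of $A^{\sigma}\otimes_L A$; it is central simple over $K$ of dimension $16$, and its structure constants are obtained by linear algebra over $K$), and then decide whether $B$ is split --- for instance by a Wedderburn-decomposition algorithm over $\mathbb{F}_q(t)$ \cite{ivanyos1994decomposition}, or simply by attempting the explicit-isomorphism algorithm of the previous paragraph and checking whether it succeeds. If $B$ is not split, then by Theorem~\ref{involutioncorestrictionsplit} $A$ has no involution of the second kind; but any quaternion $K$-subalgebra $B_0\subseteq A$ would yield an $L$-algebra isomorphism $B_0\otimes_K L\to A$ (both sides are central simple of $L$-dimension $4$, and the natural map is injective by simplicity of $B_0\otimes_K L$), forcing $\operatorname{Cor}_{L/K}(A)$ to be split (as $\operatorname{Cor}_{L/K}\circ\operatorname{Res}_{L/K}$ is multiplication by $[L:K]=2$ and $[B_0]$ is $2$-torsion) --- a contradiction. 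Hence in that case we correctly report that no quaternion $K$-subalgebra exists; otherwise we run the procedure above.

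The main obstacle --- indeed the only content beyond assembling quoted results --- is the verification in the second paragraph that the $\mathbb{F}_q(t)$ explicit-isomorphism and maximal-order machinery is genuinely characteristic-free, since everything else (forming $A^{\sigma}$, the switch map, and $B$; intersecting $L$-subspaces; taking the fixed ring of an involution) is routine linear algebra already packaged into Algorithm~\ref{alg:inv} and Proposition~\ref{prop:subalg}. A secondary point worth a sentence is the behaviour of the canonical involution of a quaternion algebra in characteristic $2$: it is still the unique symplectic involution, given by the usual formulas in a quaternion basis, and the fixed ring of its composite with $\tau$ is again a $4$-dimensional central simple $K$-subalgebra --- but this is precisely the fact used in the proof of Proposition~\ref{prop:subalg} and requires no new argument here.
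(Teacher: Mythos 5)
Your proposal is correct and follows essentially the same route as the paper, whose entire proof is to cite Proposition~\ref{prop:subalg} together with the explicit-isomorphism algorithm of \cite{ivanyos2018computing} (which is valid in characteristic $2$). The only difference is that you additionally spell out how to certify non-existence via splitness of the corestriction and the identity $\operatorname{Cor}\circ\operatorname{Res}=2$ on $2$-torsion classes --- a point the paper leaves implicit but which is a correct and welcome addition.
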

\begin{proof}
The statement follows from Proposition \ref{prop:subalg} and the fact that there exists a polynomial-time algorithm for finding explicit isomorphisms between an algebra $A$ given by structure constants and $M_4(\mathbb{F}_{2^k}(t))$ \cite{ivanyos2018computing}.
\end{proof}
Let $L$ be a quadratic extension of $K=\mathbb{F}_{2^k}(t)$ and $A$ be an algebra isomorphic to $M_2(L)$ given by structure constants. Combining Corollary \ref{cor:char2} with \cite[Theorem 3.19]{csahok2022finding} one has the following result:
\begin{theorem}\label{thm:char2zerodiv}
Let $L$ be a quadratic extension of $K=\mathbb{F}_{2^k}(t)$ and $A$ be an algebra isomorphic to $M_2(L)$ given by structure constants. Then there exists a polynomial-time algorithm that computes a zero divisor in $A$. 
\end{theorem}
\section{The main algorithm}\label{sect:algo}

In this section we propose our main algorithm for computing explicit isomorphisms between quaternion algebras over quadratic global fields. 

We start with a small observation regarding the isomorphism problem of rational quaternion algebras. It is known that there is a polynomial-time algorithm for this task if one is allowed to call an oracle for factoring integers. Furthermore, there is a polynomial-time reduction from the problem of computing explicit isomorphisms of rational quaternion algebras to factoring, which implies that the factoring oracle is indeed necessary. 

Let $B_{p,\infty}$ be the rational quaternion algebra which is ramified at $p$ and at infinity. In \cite{eisentrager2018supersingular} the authors study the following problem: if we are given two quaternion algebras isomorphic to $B_{p,\infty}$ and we are also given a maximal order in both quaternion algebras, can we compute an explicit isomorphism between them without relying on a factoring oracle. The motivation for this problem comes from the fact that the endomorphism ring of a supersingular elliptic curve is a maximal order in $B_{p,\infty}$. The authors propose a heuristic algorithm which does not rely on factoring. Here we propose an algorithm for this task which does not rely on any heuristics: 

\begin{proposition}
Let $A,B$ be quaternion algebras isomorphic to $B_{p,\infty}$ and let $O_1,O_2$ be maximal orders in $A$ and $B$ respectively. Suppose that $A$ and $B$ are isomorphic. Then there exists a polynomial-time algorithm which computes an isomorphism between $A$ and $B$. 
\end{proposition}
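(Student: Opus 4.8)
The plan is to reduce the problem to the explicit isomorphism problem for $M_4(\mathbb{Q})$ and to run the algorithm of Ivanyos--Rónyai--Schicho \cite{ivanyos2012splitting}, observing that the single step of that algorithm which in general calls a factoring oracle --- the computation of a maximal order, where one factors a discriminant --- can here be performed unconditionally. Since $A\cong B$ by hypothesis, the $16$-dimensional $\mathbb{Q}$-algebra $C:=A\otimes_{\mathbb{Q}}B^{\mathrm{op}}$ is isomorphic to $A\otimes_{\mathbb{Q}}A^{\mathrm{op}}\cong \operatorname{End}_{\mathbb{Q}}(A)\cong M_4(\mathbb{Q})$, so it suffices to compute an explicit isomorphism $\psi\colon C\to M_4(\mathbb{Q})$ and then apply the reduction recalled in Section~2. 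Concretely, the simple $C$-module $V=\mathbb{Q}^4$ becomes a left $A$-module via $a\mapsto\psi(a\otimes 1)$ and a right $B$-module via $b\mapsto\psi(1\otimes b^{\mathrm{op}})$, both of $\mathbb{Q}$-dimension $4=\dim_{\mathbb{Q}}A=\dim_{\mathbb{Q}}B$; because $A$ and $B$ are quaternion division algebras (they are isomorphic to $B_{p,\infty}$), any nonzero module of this dimension over either of them is forced to be the regular representation, and composing the two resulting identifications yields, as in \cite{ivanyos2012splitting}, an algebra isomorphism between $A$ and $B$.

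First I would recover $p$ from the data without factoring: the reduced discriminant of $O_1$, which for a maximal order in $B_{p,\infty}$ equals $p$, is computed in polynomial time from the given $\mathbb{Z}$-basis of $O_1$ (for instance as an integer square root of $\det(\operatorname{Trd}(b_ib_j))$), and this involves no integer factoring. Next set $\Gamma_0:=O_1\otimes_{\mathbb{Z}}O_2^{\mathrm{op}}$, which is explicitly computable. For every prime $q\neq p$ the algebra $A_q$ is split, so the maximal order $(O_1)_q$ is conjugate to $M_2(\mathbb{Z}_q)$, and likewise $(O_2^{\mathrm{op}})_q$; hence $(\Gamma_0)_q\cong M_2(\mathbb{Z}_q)\otimes_{\mathbb{Z}_q}M_2(\mathbb{Z}_q)\cong M_4(\mathbb{Z}_q)$ is already maximal. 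Thus $\Gamma_0$ is $q$-maximal for every $q\neq p$, so $\operatorname{disc}(\Gamma_0)$ is up to sign a power of $p$, and --- knowing $p$ --- we only have to enlarge $\Gamma_0$ to a $p$-maximal order of $C$. This is done in polynomial time by the usual radical/idealizer process, which needs only the prime $p$ and no factoring.

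With a maximal order of $C$ at hand, the remaining steps of \cite{ivanyos2012splitting} are factoring-free. One embeds $C$ into $M_4(\mathbb{R})$ using the explicit real splitting built from $A\otimes_{\mathbb{Q}}\mathbb{R}\cong\mathbb{H}$, $B^{\mathrm{op}}\otimes_{\mathbb{Q}}\mathbb{R}\cong\mathbb{H}$ and the classical isomorphism $\mathbb{H}\otimes_{\mathbb{R}}\mathbb{H}\cong M_4(\mathbb{R})$; computes a reduced basis of the maximal order by lattice reduction; and searches the elements of small norm for one of rank $1$. Since the base field is $\mathbb{Q}$ and $n=4$ is fixed, the bound of \cite{ivanyos2012splitting} makes this search range over only a bounded number of elements, so it costs a constant; the rare case in which a basis element is itself a zero divisor is handled by passing to a smaller $n$ exactly as in \cite{ivanyos2012splitting}, \cite{ivanyos2013improved}. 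The action of $C$ on the minimal left ideal generated by the rank-$1$ element then gives $\psi$, and composing with the module construction of the first paragraph produces the isomorphism $A\to B$.

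The real content --- and the only place where this argument does more than bookkeeping --- is the observation that the naive order $O_1\otimes_{\mathbb{Z}}O_2^{\mathrm{op}}$ has discriminant supported at the single prime $p$, which is moreover recoverable from $O_1$ alone without factoring; this is exactly what removes the factoring oracle from \cite{ivanyos2012splitting}. Everything else follows from the fact that $n=4$ and the base field $\mathbb{Q}$ are fixed, so the quantities in \cite{ivanyos2012splitting} that are in general exponential in these parameters become absolute constants here. The point I would be most careful about is verifying, for the module-theoretic reduction, that the simple $C$-module genuinely restricts to the regular representation on each tensor factor --- but, as noted, this is automatic once $C\cong M_4(\mathbb{Q})$ and $A,B$ are $4$-dimensional division algebras.
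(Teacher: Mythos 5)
Your core observation is exactly the paper's: $O_1\otimes_{\mathbb{Z}}O_2^{\mathrm{op}}$ is locally maximal at every prime $q\neq p$ (since $(O_i)_q\cong M_2(\mathbb{Z}_q)$ there), so its discriminant is supported at the single known prime $p$ and a maximal order of $C=A\otimes_{\mathbb{Q}}B^{\mathrm{op}}$ can be computed without a factoring oracle; the paper does this by citing \cite{voight2013identifying}, noting that the only factorization the general algorithm needs is already in hand. The reduction from an explicit isomorphism $C\cong M_4(\mathbb{Q})$ back to an isomorphism $A\to B$ via the simple module is likewise the same as in the paper.

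The gap is in your final step. You propose to run the search of \cite{ivanyos2012splitting} and, if a short element turns out to be a zero divisor of intermediate rank, to handle it ``by passing to a smaller $n$ exactly as in \cite{ivanyos2012splitting}.'' This is precisely the point the paper flags as the place where factoring would creep back in: a rank-$2$ idempotent $e$ reduces the problem to finding a rank-$1$ element in $eCe\cong M_2(\mathbb{Q})$, a split quaternion algebra in which you do not a priori control a maximal order, and the generic algorithm for that subproblem is factoring-equivalent. (One could try to argue that $e\Gamma e$ is again a maximal order for $e$ an idempotent of the maximal order $\Gamma$, but you would need to ensure that the idempotent you construct actually lies in $\Gamma$ and that maximality is inherited; you do neither.) The paper sidesteps the issue by invoking the algorithm of \cite{ivanyos2013improved}, which produces a primitive idempotent directly rather than merely a zero divisor. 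You cite \cite{ivanyos2013improved}, but only in support of the recursion you propose, which is not the role it plays; as written, your recursion step is not factoring-free and the claimed removal of the oracle is incomplete.
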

\begin{proof}
In \cite{ivanyos2012splitting} the authors show that finding an isomorphism between $A$ and $B$ can be reduced to finding a primitive idempotent in $C=A\otimes_{\mathbb{Q}}B^{op}$. First observe that $O_1\otimes O_2^{op}$ is an order in $C$ which is locally maximal at every prime except at $p$. Thus we can find a maximal order containing $O_1\otimes O_2^{op}$ in polynomial time without factoring using the algorithm from \cite{voight2013identifying} (in the general algorithm one needs to factor the discriminant of the order but in this case the factorization is already known). Now we could use the algorithm from \cite{ivanyos2012splitting} but then it might only find a zero divisor which is not enough for our purposes (as it reduces to finding a zero divisor in a quaternion algebra where we do not have a maximal order). Instead we use the algorithm from \cite{ivanyos2013improved} which finds a primitive idempotent directly. 
\end{proof}
\begin{remark}
The same reasoning applies to the case where $A$ and $B$ are isomorphic rational quaternion algebras and one knows the places at which the algebras ramify. 
\end{remark}

The main goal of the remainder of the section is to design an efficient algorithm which computes an explicit isomorphism between isomorphic quaternion algebras over quadratic extensions $L$ of $\mathbb{Q}$ or $\mathbb{F}_q(t)$ (where $q$ is a prime power and can be even). In \cite[Section 4]{ivanyos2012splitting} the authors show the following reduction: 
\begin{theorem}
Let $A_1$ and $A_2$ be isomorphic central simple algebras of degree $n$ over an infinite field $K$. Then there is a polynomial-time reduction from computing an explicit isomorphism between $A_1$ and $A_2$ to computing an explicit isomorphism between $A_1\otimes A_2^{op}$ and $M_{n^2}(K)$.
\end{theorem}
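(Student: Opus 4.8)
The plan is to follow the argument of \cite{ivanyos2012splitting} sketched above for number fields: recover both $A_1$ and $A_2$ from the unique simple module of $C:=A_1\otimes_K A_2^{op}$. Since $A_1\cong A_2$, we have $C\cong A_1\otimes_K A_1^{op}\cong\operatorname{End}_K(A_1)\cong M_{n^2}(K)$, so the assumed oracle, applied to the (polynomial-time computable) structure constants of $C$, returns an explicit isomorphism $\Theta\colon C\xrightarrow{\sim}M_{n^2}(K)$ in polynomial time. Pulling back the column module $K^{n^2}$ of $M_{n^2}(K)$ along $\Theta$ (equivalently, taking an explicitly computed minimal left ideal of $C$) yields an explicit simple left $C$-module $V$ with $\dim_K V=n^2$.

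Restricting the $C$-action makes $V$ a left $A_1$-module via $a\mapsto a\otimes 1$ and a left $A_2^{op}$-module via $b\mapsto 1\otimes b$; these two actions commute because $a\otimes 1$ and $1\otimes b$ commute in $C$. Over either of the simple $K$-algebras $A_1$ and $A_2^{op}$ --- both of $K$-dimension $n^2$ --- a module of $K$-dimension $n^2$ is isomorphic to the regular module, so $V$ is free of rank one on each side. The one genuinely algorithmic step is to compute a free generator $w_0\in V$ of $V$ as a left $A_2^{op}$-module, i.e.\ an element for which $\phi_2\colon A_2^{op}\to V$, $b\mapsto(1\otimes b)w_0$, is a $K$-linear isomorphism. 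In a fixed pair of bases the determinant of $\phi_2$ is a polynomial in the coordinates of $w_0$ which is not identically zero (a free generator exists by the dimension count), so over the infinite field $K$ one finds a valid $w_0$ in polynomial time by the standard black-box polynomial non-vanishing search (restrict to a generic line and evaluate at sufficiently many points), exactly as in the rank-one searches of \cite{ivanyos2012splitting}; this is the only place where the hypothesis that $K$ is infinite is used.

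Fix such a $w_0$ and define $\eta\colon A_1\to A_2$ by letting $\eta(a)$ be the unique element of $A_2$ with $(1\otimes\eta(a))w_0=(a\otimes 1)w_0$ (existence and uniqueness because $\phi_2$ is bijective); computing $\eta$ on a $K$-basis of $A_1$ reduces to solving linear systems, after which $\eta$ is extended $K$-linearly. Then $\eta$ is $K$-linear by construction and $\eta(1)=1$ by uniqueness, while for $a,a'\in A_1$,
\begin{align*}
(1\otimes\eta(aa'))w_0 &= (aa'\otimes 1)w_0 = (a\otimes 1)(a'\otimes 1)w_0 = (a\otimes 1)(1\otimes\eta(a'))w_0 \\
&= (1\otimes\eta(a'))(a\otimes 1)w_0 = (1\otimes\eta(a'))(1\otimes\eta(a))w_0 = (1\otimes\eta(a)\eta(a'))w_0,
\end{align*}
where the last equality uses that multiplication in $A_2^{op}$ is that of $A_2$ reversed. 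Injectivity of $\phi_2$ forces $\eta(aa')=\eta(a)\eta(a')$, so $\eta$ is a $K$-algebra homomorphism; since $A_1$ is simple and $\eta(1)\neq 0$ it is injective, and $\dim_K A_1=\dim_K A_2$ makes it an isomorphism. As forming $C$, the oracle call, the search for $w_0$ and the linear algebra defining $\eta$ are all polynomial time, this establishes the reduction.

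I expect the only real pitfall to be the opposite-algebra bookkeeping: one must treat $V$ as a left $A_2^{op}$-module (equivalently, a right $A_2$-module) rather than as a left $A_2$-module, since the latter choice would only produce an isomorphism $A_1\cong A_2^{op}$, which is in general strictly weaker than $A_1\cong A_2$. Everything else is routine linear algebra combined with the polynomial non-vanishing search over the infinite ground field.
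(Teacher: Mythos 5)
Your proof is correct and takes essentially the same route as the paper, which only sketches this reduction (citing Ivanyos--R\'onyai--Schicho): extract the simple module $V$ of $A_1\otimes_K A_2^{op}$ from the oracle's output, identify it with the regular representations of $A_1$ and of $A_2^{op}$, and read off the algebra isomorphism from the two module isomorphisms. Your handling of the opposite-algebra bookkeeping and of the generator search over the infinite field matches the intended argument.
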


Thus if one is given $A_1$ and $A_2$ which are quaternion algebras over $L$ which is a separable quadratic extension of either $K=\mathbb{Q}$ or $K=\mathbb{F}_q(t)$, then it is enough to find an explicit isomorphism between $A_1\otimes A_2^{op}$ and $M_4(L)$. Note that when $K=\mathbb{Q}$ the paper \cite{ivanyos2012splitting} proposes such an algorithm but it is exponential in the size of the discriminant of $L/\mathbb{Q}$. We will get around this issue by exploiting the fact that in this case $M_4(L)$ is not given by a usual structure constant representation but as a tensor product of two quaternion algebras. 

First we identify three algorithmic problems on which the main algorithm will rely:

\begin{problem}\label{prob:maxright}
Let $K$ be a field and let $A$ be an algebra over $K$ isomorphic to $M_4(K)$ or $M_{16}(K)$ given by structure constants. Compute a maximal right ideal of $K$. 
\end{problem}
\begin{remark}
Problem \ref{prob:maxright} is equivalent to finding an explicit isomorphism between $A$ and $M_4(K)$ or $M_{16}(K)$.
\end{remark}
\begin{problem}\label{prob:quatdiv}
Let $K$ be a field and let $D$ be a quaternion division algebra over $K$. Let $A$ be an algebra over $K$ isomorphic to $M_2(D)$ given by structure constants. Compute a zero divisor in $A$. 
\end{problem}
\begin{problem}\label{prob:quatzer}
Let $K$ be a field and let $L$ be a separable quadratic extension of $K$. Let $A$ be a split quaternion algebra over $L$ given by structure constants. Compute a zero divisor in $A$.
\end{problem}


Let $K$ be a field and let $L$ be a separable quadratic extension of $K$. We show that if one can find efficient algorithms for these problems then there exists an efficient algorithm for computing explicit isomorphisms between quaternion algebras over $L$. 

\begin{remark}
In our applications $K$ will be either $\mathbb{Q}$ or $\mathbb{F}_q(t)$. This brings up the question of why don't we just state two specific algorithms tuned to either the rational or the function field case. The reason is twofold. First, both algorithms would follow the exact same outline, only the subroutine for the aforementioned Problem \ref{prob:maxright}, \ref{prob:quatdiv},\ref{prob:quatzer} would be different. Second, if someone studied the isomorphism problem of quaternion algebras for other fields, a general framework might come in handy. More concretely, if one wanted to extend to the case where $L$ is a separable quadratic extensions of a separable quadratic extension of $\mathbb{Q}$ or $\mathbb{F}_q(t)$ then it is enough to find efficient algorithms for Problems \ref{prob:maxright}, \ref{prob:quatdiv},\ref{prob:quatzer} for the case where $K$ is a separable quadratic extension of $\mathbb{Q}$ or $\mathbb{F}_q(t)$. For example when $K=\mathbb{Q}(\sqrt{2})$, then Problem \ref{prob:maxright} admits a polynomial-time algorithm, thus only the other two have to be dealt with.     
\end{remark}

\begin{theorem}\label{thm:mainalgo}
Let $A_1$ and $A_2$ be isomorphic quaternion algebras over $L$ where $L$ is a quadratic extension of $K$. Suppose there exist polynomial-time algorithms (in the rational case polynomial-time algorithm with an oracle for factoring integers) for Problems \ref{prob:maxright}, \ref{prob:quatdiv},\ref{prob:quatzer}. Then there exists a polynomial-time algorithm for computing an isomorphism between $A_1$ and $A_2$. 
\end{theorem}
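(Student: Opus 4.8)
The plan is to reduce everything to the three subroutines by following the descent method of Section 3. By the quoted reduction theorem of \cite{ivanyos2012splitting}, it suffices to compute an explicit isomorphism between $C := A_1 \otimes_L A_2^{op}$ and $M_4(L)$, equivalently (by the remark following Problem \ref{prob:maxright}) to compute a maximal right ideal of $C$. The crucial point, which lets us avoid the exponential dependence on the discriminant of $L/K$, is that $C$ is a tensor product of two quaternion algebras over $L$, hence carries a canonical involution of the first kind (the tensor product of the conjugation involutions on $A_1$ and on $A_2^{op}$). So the strategy is: first run Algorithm \ref{alg:inv} on $C$ to produce either a zero divisor of $C$ or an involution of the second kind on $C$; in the latter case, compose it with the canonical involution of the first kind and take fixed points to obtain a central simple $K$-subalgebra $B \subseteq C$ with $B \otimes_K L \cong C$; then analyze and split $B$ over $K$, and finally transport a splitting of $B$ back up to a splitting of $C$, i.e.\ to a maximal right ideal of $C$.

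In more detail, I would proceed as follows. First I would form $C = A_1 \otimes_L A_2^{op}$ from the structure constants of $A_1, A_2$; this is a $16$-dimensional central simple $L$-algebra, split because $A_1 \cong A_2$. Running Algorithm \ref{alg:inv} requires a maximal right ideal of the corestriction $\operatorname{Cor}_{L/K}(C)$, which is an algebra over $K$ of dimension $16^2 = 256$ isomorphic to $M_{16}(K)$; computing it is exactly Problem \ref{prob:maxright} (the $M_{16}(K)$ case). By Theorem \ref{thm:inv} the algorithm then runs in polynomial time and returns either a zero divisor of $C$ or an involution $\tau$ of the second kind. If a zero divisor of $C$ is returned, we are essentially done: $C \cong M_4(L)$ and from a zero divisor one reduces the dimension and, by standard structure-constant manipulations over $L$ (Wedderburn decomposition is available since one works with explicit linear algebra over $L$), recovers a rank-one element and hence a maximal right ideal. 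If instead $\tau$ is returned, set $\theta = \tau \circ \iota$ where $\iota$ is the canonical involution of the first kind on $C = A_1 \otimes_L A_2^{op}$; then $\theta$ is an $L$-semilinear automorphism of $C$ with $\theta^2$ trivial (since $\tau$ and $\iota$ commute suitably — this is the standard fact underlying Galois descent), and its fixed algebra $B := C^\theta$ is a central simple $K$-algebra of dimension $16$ over $K$ with $B \otimes_K L \cong C$, computed by solving a system of $K$-linear equations.

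Now $B$ is a central simple algebra of degree $4$ over $K$; since $B \otimes_K L$ is split, $B$ is split by the quadratic extension $L/K$, so by the theory of central simple algebras its index divides $2$, i.e.\ $B \cong M_4(K)$, $B \cong M_2(D)$ for a quaternion division algebra $D$ over $K$, or $B \cong M_1(D')$ with $D'$ of degree $4$ — but the last case cannot occur since an algebra split by a quadratic extension has index at most $2$. One runs the Wedderburn/structure algorithms over $K$ (available for $K = \mathbb{Q}$ and $K = \mathbb{F}_q(t)$ by the results recalled in Section 2) to decide which case holds and, in the $M_2(D)$ case, to obtain an explicit $D$. If $B \cong M_4(K)$, Problem \ref{prob:maxright} (the $M_4(K)$ case) gives a maximal right ideal $J$ of $B$, and then $J \otimes_K L$ is a maximal right ideal of $C$. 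If $B \cong M_2(D)$, Problem \ref{prob:quatdiv} produces a zero divisor in $B$; a zero divisor in $M_2(D)$ lets one peel off a copy of $D$ and reduces, after passing to $L$ where $D \otimes_K L$ splits (so $B \otimes_K L$ is split and one needs a rank-one element — here Problem \ref{prob:quatzer}, applied to the relevant quaternion algebra over $L$, is the tool), to exhibiting a rank-one idempotent of $C$ and hence a maximal right ideal of $C$. In all cases we obtain a maximal right ideal of $C = A_1 \otimes_L A_2^{op}$, and the reduction theorem converts this into the desired isomorphism $A_1 \to A_2$. Since each step is either linear algebra over $L$ or $K$, a structure-constant computation, or one of the three assumed subroutines, the whole algorithm runs in polynomial time (with a factoring oracle in the rational case).

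The main obstacle I expect is \emph{not} the descent construction itself — that is essentially Proposition \ref{prop:subalg} applied to a degree-$16$ rather than degree-$4$ algebra, and goes through verbatim — but rather the bookkeeping in the last paragraph: carefully showing that the splitting data obtained for the $K$-algebra $B$ (a maximal right ideal, or a zero divisor, or via $D$) can be transported to a splitting of $C = B \otimes_K L$ while keeping all intermediate objects of polynomially bounded size, and handling the $M_2(D)$ case where one does not get a maximal right ideal for free but must combine Problem \ref{prob:quatdiv} with Problem \ref{prob:quatzer} and the reduction-of-dimension trick. One must also verify that $\theta = \tau \circ \iota$ really is an involution/descent datum — i.e.\ that the canonical involution of the first kind on a tensor product of quaternions commutes appropriately with the constructed involution of the second kind — which is where the hypothesis that $C$ is a tensor product of quaternion algebras, rather than an arbitrary degree-$16$ algebra, is used essentially.
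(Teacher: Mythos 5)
Your overall strategy is exactly the paper's: reduce to splitting $A_1\otimes_L A_2^{op}$, run Algorithm \ref{alg:inv} using Problem \ref{prob:maxright} for the corestriction, and in the involution branch compose with the canonical involution of the first kind, descend to a degree-$4$ central simple $K$-algebra, and dispatch the three Wedderburn types to Problems \ref{prob:maxright} and \ref{prob:quatdiv} (ruling out the division-algebra case because the algebra is split by a quadratic extension). That part is fine.

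There is, however, one step that fails as written: in the branch where Algorithm \ref{alg:inv} returns a zero divisor of $C$, you claim that ``standard structure-constant manipulations over $L$'' and Wedderburn decomposition recover a rank-one element. They do not. Knowing that $C\cong M_4(L)$ is not the same as computing an explicit isomorphism; producing a rank-one element in a split algebra over a global field given by structure constants is precisely the hard explicit isomorphism problem this paper is about, and it is not linear algebra. The correct handling, as in the paper, is to split by the rank of the zero divisor: a zero divisor of rank $1$ or $3$ yields a primitive idempotent directly (take the left unit of the generated right ideal, and replace $e$ by $1-e$ in the rank-$3$ case), whereas a rank-$2$ zero divisor yields an idempotent $e$ with $eCe\cong M_2(L)$, and finding a rank-one element there is exactly Problem \ref{prob:quatzer} --- a quaternion algebra over the quadratic extension $L$, which is the whole reason that problem appears in the hypotheses. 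You do invoke Problem \ref{prob:quatzer} correctly in the $M_2(D)$ branch, so the missing ingredient is already in your toolkit; but as stated the zero-divisor branch of your argument is wrong, and the same repair (reduce to $eCe\cong M_2(L)$ and call Problem \ref{prob:quatzer}) is also what makes the ``peel off a copy of $D$'' step in your $M_2(D)$ branch rigorous.
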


\begin{proof}
We provide an algorithm for computing an explicit isomorphism between $A_1^{op}\otimes A_2$ and $M_4(L)$. Then \cite[Section 4]{ivanyos2012splitting} implies that one can compute an explicit isomorphism between $A_1$ and $A_2$ in polynomial time.

Let $B=A_1^{op}\otimes A_2$. Then one can compute an involution of the first kind on $B$ since it is given as a tensor product of quaternion algebras (i.e., we take the ``product'' of the canonical involutions). 

Applying Theorem \ref{thm:inv} one can either construct an involution of the second kind or a zero divisor in $B$ using an efficient algorithm for Problem \ref{prob:maxright}. Suppose first that the algorithm from Theorem \ref{thm:inv} finds a zero divisor $a$ in $B$. If the zero divisor has rank 1 or 3, then one can find either a rank 1 or a rank 3 idempotent by computing the left unit of the right ideal generated by $a$. Observe that if an idempotent $e$ has rank 3, then $1-e$ has rank 1, thus one has actually found a primitive idempotent in both cases which implies an explicit isomorphism between $B$ and $M_4(L)$. If $a$ has rank 2, then we construct an idempotent $e$ of rank 2 in a similar fashion. Then $eBe\cong M_2(L)$ and computing an explicit isomorphism between them can be used to construct an explicit isomorphism between $B$ and $M_4(L)$ (as a rank one element in $eBe\cong M_2(L)$ has rank 1 in $B$). Computing an explicit isomorphism between $eBe$ and $M_2(L)$ is exactly Problem \ref{prob:quatzer}. Note that the discussion also implies that it is enough to find a zero divisor in $B$ as it can be used for constructing an explicit isomorphism between $B$ and $M_4(L)$.

Now we can suppose that the algorithm from Theorem \ref{thm:inv} has computed an involution of the second kind on $B$. We then have an involution of the second kind and an involution of the first kind on $A$. Composing them and taking fixed points finds a subalgebra $C$ of $B$ which is a central simple algebra of degree 4 over $K$ and $C\otimes_K L=B$. There are 3 kinds of central simple algebras of degree 4: full matrix algebras, division algebras, and $2\times 2$ matrix algebras over a division quaternion algebra. When $C$ is a full matrix algebra over $K$, then one can use an algorithm for Problem \ref{prob:maxright} to compute a zero divisor. When $C$ is a $2\times 2$ matrix algebra over a division quaternion algebra, then computing a zero divisor in $C$ is an instance of Problem \ref{prob:quatdiv}.  Finally, $C$ is never a division algebra as it is split by a quadratic extension (the smallest splitting field of a degree $4$ central simple algebra has degree 4 over the ground field for global fields). 
\end{proof}

After obtaining a general algorithm our goal is to look at the Problems \ref{prob:maxright}, \ref{prob:quatdiv}, \ref{prob:quatzer} in the cases where $K=\mathbb{Q}$ or $K=\mathbb{F}_q(t)$.  

\subsection{Rational function fields}

 We begin with the case when $K=\mathbb{F}_q(t)$ and $q$ is odd:
\begin{enumerate}
    \item Problem \ref{prob:maxright} can be solved in polynomial time using the main algorithm from \cite[Section 4]{ivanyos2018computing}.
    \item Problem \ref{prob:quatdiv} can be obtained in polynomial time using the algorithm from \cite[Corollary 17]{gomez2020primitive}
    \item Problem \ref{prob:quatzer} admits a polynomial-time algorithm derived in \cite[Proposition 43]{ivanyos2019explicit}.
\end{enumerate}

Now we look at the case where $q$ is even :
\begin{enumerate}
    \item Problem \ref{prob:maxright} can be accomplished in polynomial time using the main algorithm from \cite[Section 4]{ivanyos2018computing}.
    \item Problem \ref{prob:quatdiv} admits a polynomial-time algorithm by \cite[Corollary 3.22.]{csahok2022finding}
    \item Problem \ref{prob:quatzer} admits a polynomial-time algorithm by Theorem \ref{thm:char2zerodiv}
\end{enumerate}

All these imply the following:
\begin{corollary}
Let $L$ be a separable quadratic extension of $\mathbb{F}_q(t)$ where $q$ is a prime power (which can be even). Let $A_1$ and $A_2$ be two isomorphic quaternion algebras over $L$. Then there exists a randomized polynomial-time algorithm which computes an isomorphism between $A_1$ and $A_2$. 
\end{corollary}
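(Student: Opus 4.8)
The plan is to combine the general reduction theorem proved immediately above with the three problem-specific subroutines that were just listed for the two sub-cases $q$ odd and $q$ even. First I would invoke the previous theorem: given isomorphic quaternion algebras $A_1,A_2$ over a separable quadratic extension $L$ of $K=\mathbb{F}_q(t)$, computing an explicit isomorphism between them reduces in polynomial time to finding efficient algorithms for Problems \ref{prob:maxright}, \ref{prob:quatdiv}, and \ref{prob:quatzer} over the base field $K$. So the entire content of the corollary is that these three problems admit (randomized) polynomial-time algorithms when $K=\mathbb{F}_q(t)$, for every prime power $q$.

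Next I would dispatch the two characteristic cases separately, exactly as enumerated in the two lists preceding the corollary. For odd $q$: Problem \ref{prob:maxright} is handled by the main algorithm of \cite[Section 4]{ivanyos2018computing} (finding a maximal right ideal is equivalent to splitting a full matrix algebra, which that paper does in polynomial time over $\mathbb{F}_q(t)$); Problem \ref{prob:quatdiv}, finding a zero divisor in $M_2(D)$ for a quaternion division algebra $D$, is solved by \cite[Corollary 17]{gomez2020primitive}; and Problem \ref{prob:quatzer}, finding a zero divisor in a split quaternion algebra over the quadratic extension $L$, is \cite[Proposition 43]{ivanyos2019explicit}. For even $q$ the situation is the one this paper has been building toward: Problem \ref{prob:maxright} is again \cite[Section 4]{ivanyos2018computing} (which works in even characteristic); Problem \ref{prob:quatdiv} is precisely Corollary \ref{divalg}; and Problem \ref{prob:quatzer} is precisely Theorem \ref{2quat}. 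In either parity all three subroutines run in randomized polynomial time, so the hypothesis of the general theorem is met and the corollary follows.

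The only genuine mathematical work — the new content of the paper — is hidden in the even-$q$ branch, namely Theorem \ref{2quat} and Corollary \ref{divalg}, which in turn rest on Corollary \ref{cor:char2} (the descent method in characteristic $2$) and on Theorem \ref{finding} (solving four-variable quadratic forms over $\mathbb{F}_{2^k}(t)$). Those have already been established earlier in the excerpt, so for the present corollary they may simply be cited. I do not anticipate a real obstacle here: the proof is an assembly of cited results, and the main thing to be careful about is bookkeeping — confirming that the ``infinite field'' hypothesis of the reduction theorem of \cite{ivanyos2012splitting} is satisfied (it is, since $\mathbb{F}_q(t)$ is infinite), and that all the cited algorithms are randomized polynomial-time in the input size as defined in the ``Polynomial-time algorithms'' subsection, so that composing a constant number of them stays polynomial. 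I would therefore state the proof in two short paragraphs, one per parity, each reducing via the general theorem to the three enumerated subroutines.
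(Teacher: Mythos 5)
Your proposal matches the paper's argument exactly: the corollary is stated immediately after the two enumerated lists of subroutines for odd and even $q$, with the justification being precisely the general reduction theorem combined with the cited algorithms for Problems \ref{prob:maxright}, \ref{prob:quatdiv}, and \ref{prob:quatzer} in each parity case. Your additional bookkeeping remarks (the infinite-field hypothesis, composition of polynomial-time subroutines) are correct and consistent with the paper's implicit reasoning.
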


\subsection{The rationals}

Now we turn our attention to the $K=\mathbb{Q}$ case. Problem \ref{prob:maxright} can again be accomplished in polynomial time (with the help of an oracle for factoring integers) using the algorithm from \cite[Section 2]{ivanyos2012splitting}. Problem \ref{prob:quatzer} can also be obtained in polynomial time using an oracle for factoring integers. One has to use the algorithm \cite[Corollary 19]{kutas2019splitting}. 

There is no known algorithm for Problem \ref{prob:quatdiv} in the rational case. In the rest of this section we propose a polynomial-time algorithm for this task which is analogous to \cite[Corollary 17]{gomez2020primitive}. The key ingredient of the algorithm is a result by Schwinning \cite{schwinning2011ein} (which is referred to and generalized in \cite{bockle2016division}):
\begin{theorem}
Suppose one is given a list of places $v_1,\dots,v_k$ where $k$ is even. Then there exists a polynomial-time algorithm which constructs a quaternion algebra which ramifies at exactly those places. 
\end{theorem}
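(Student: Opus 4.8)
The plan is to imitate the function-field construction of Proposition~\ref{hasse}: realize the desired algebra as a symbol algebra $\left(\frac{a,b}{\mathbb{Q}}\right)$ in which $a$ is, up to sign and a possible factor of $2$, the product of the odd finite primes among $v_1,\dots,v_k$, while $b$ is a single auxiliary prime chosen to lie in a carefully prescribed arithmetic progression. Write $S=\{v_1,\dots,v_k\}$ and let $p_1,\dots,p_m$ be the odd finite primes in $S$. The ramification of $\left(\frac{a,b}{\mathbb{Q}}\right)$ at a place $v$ is detected by the Hilbert symbol $(a,b)_v$, and for our choice of $a,b$ these symbols are under control: at a prime $p_i$ with $p_i\,\|\,a$ and $p_i\nmid b$ one has $(a,b)_{p_i}=\left(\frac{b}{p_i}\right)$; at any finite prime dividing neither $2$, $a$, nor $b$ the symbol is trivial; at the prime $b$ (which we take to be prime) it equals $\left(\frac{a}{b}\right)$; and at $2$ and $\infty$ it is given by the usual explicit formulas in terms of $a$ and $b$ modulo $8$ and their signs.

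First I would fix the behavior at the ``bad'' places $2$ and $\infty$. By a short case distinction on whether $2\in S$ and whether $\infty\in S$ (four cases), choose the sign of $a$ and, if necessary, whether to include a factor $2$ in $a$, and prescribe a residue class for $b$ modulo $8$ together with the sign of $b$ — allowing $b=-\ell$ with $\ell$ a positive prime precisely when $\infty\in S$, since $(a,b)_\infty=-1$ forces both $a$ and $b$ negative — so that $(a,b)_2$ and $(a,b)_\infty$ attain the required values $\pm1$. Next, for each $i$ choose a quadratic non-residue $r_i$ modulo $p_i$ and impose $b\equiv r_i\pmod{p_i}$, which guarantees $(a,b)_{p_i}=-1$. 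All of these are congruence conditions on $b$ (or on $\ell$) modulo $M:=8\,p_1\cdots p_m$, and together they single out one residue class that is coprime to $M$. It then remains to produce an actual prime in this class; by Linnik's theorem the least such prime has size polynomial in $M$, hence polynomial in the input, and a randomized search through the progression followed by a primality test finds one in expected polynomial time. (For $m=0$ the non-residue conditions are vacuous and the argument degenerates to the classical small algebras, e.g.\ $\left(\frac{-1,-1}{\mathbb{Q}}\right)$ for $S=\{2,\infty\}$.)

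Finally I would verify that $A:=\left(\frac{a,b}{\mathbb{Q}}\right)$ ramifies at exactly $S$. By construction $A$ is unramified at every finite prime outside $\{2,b,p_1,\dots,p_m\}$, it ramifies at each $p_i$, and it has the prescribed behavior at $2$ and $\infty$, so the ramification set is $S$ possibly together with $\{b\}$. To rule out ramification at $b$ I would invoke the global product formula $\prod_v (a,b)_v=1$: every factor except the one at $b$ has now been prescribed, and their product equals $(-1)^{|S|}$; since $|S|=k$ is even this product is $+1$, forcing $(a,b)_b=+1$, i.e.\ $A$ is split at $b$. Hence $A$ ramifies exactly at $v_1,\dots,v_k$. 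Every step manipulates integers of bit-size polynomial in the input and uses only the Chinese Remainder Theorem, primality testing, and a bounded search for a prime in an arithmetic progression, so the whole procedure runs in randomized polynomial time.

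The step I expect to be the main obstacle is the effective prime-finding: one needs a prime of polynomially bounded size in a prescribed residue class modulo $M$, which is exactly where an effective form of Dirichlet's theorem (Linnik's bound, or a randomized search justified by the positive density) is unavoidable — this is the analogue in the number-field setting of the appeal to Lemma~\ref{number} in the proof of Proposition~\ref{hasse}. A secondary, purely bookkeeping difficulty is making the finite case analysis at $2$ and $\infty$ fully explicit and checking that the resulting system of congruences on $b$ is always consistent; the evenness of $k$ is what ultimately makes everything fit, via the product formula.
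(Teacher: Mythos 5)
Your construction is essentially correct, but note that the paper does not actually prove this theorem: it is quoted from Schwinning's thesis \cite{schwinning2011ein} and used as a black box. What you have done is transplant the proof of Proposition~\ref{hasse} (the characteristic-$2$ function-field analogue) to $\mathbb{Q}$, which is a perfectly reasonable way to supply the missing argument. The two proofs share the same skeleton: put the prescribed finite primes into $a$, pick the second slot $b$ to be a single auxiliary prime lying in a residue class that forces ramification at each $p_i$ (a non-residue condition, exactly as in Proposition~\ref{hasse}), verify splitting at all untouched places, and let the product formula/Hilbert reciprocity take care of the one remaining place. The differences are exactly the ones you identify: over $\mathbb{Q}$ the dyadic place and the real place need explicit mod-$8$ and sign conditions (the paper's Proposition~\ref{hasse} instead leaves $\infty$ to reciprocity and verifies splitting at $b$ directly by Hensel, whereas you leave $b$ to reciprocity and control $2,\infty$ directly --- both are legitimate, since reciprocity lets you leave exactly one place undetermined); and the prime-in-a-progression step uses Linnik's bound or a randomized Dirichlet search in place of Lemma~\ref{number}. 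Your parity bookkeeping is right: with all symbols away from $\ell$ prescribed and $|S|=k$ even, $\prod_v(a,b)_v=1$ forces $(a,b)_\ell=+1$.

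The one point you should make fully explicit is the consistency of the conditions at $2$ and $\infty$ with the choice of $a$. When $2\in S$ and you do not put a factor of $2$ into $a$, the symbol $(a,b)_2=(-1)^{\epsilon(a)\epsilon(b)}$ can only be forced to $-1$ if $a\equiv 3\pmod 4$, and the sign of $a$ is simultaneously constrained by whether $\infty\in S$; the clean fix is to always include the factor $2$ in $a$ when $2\in S$, which decouples the condition at $2$ (you then control $(a,b)_2$ through $\omega(b)$, i.e.\ through $b\bmod 8$, independently of everything else). With that convention the congruences on $b$ modulo $8$, modulo each $p_i$, and the sign of $b$ are pairwise independent, so CRT always produces a valid class coprime to $8p_1\cdots p_m$, and the rest of your argument goes through.
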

\begin{proposition}
Let $A$ be an algebra isomorphic to $M_2(D)$ where $D$ is a division quaternion algebra. Then there exist a polynomial-time algorithm which is allowed to call an oracle for factoring integers which computes a zero divisor in $A$. 
\end{proposition}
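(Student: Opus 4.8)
The plan is to imitate the proof of Corollary~\ref{divalg}, substituting rational ingredients for the function-field ones. Since $A\cong M_2(D)$ is Brauer equivalent to $D$, the two algebras have the same Hasse invariants; in particular $A$ ramifies exactly at the (finite, even) set $S=\{v_1,\dots,v_k\}$ of places of $\mathbb{Q}$ at which $D$ ramifies, and the local index of $A$ at each such place is $2$. The first step is to recover $S$ from the structure constants of $A$: compute a maximal order $\Lambda$ in $A$, factor its reduced discriminant with the factoring oracle, and for each prime $p$ dividing it decide whether $A\otimes_{\mathbb{Q}}\mathbb{Q}_p$ is split (a bounded-size computation over $\mathbb{Z}_p$ and its residue field); the archimedean place is handled by checking whether $A\otimes_{\mathbb{Q}}\mathbb{R}\cong M_2(\mathbb{H})$, e.g.\ via the signature of the trace form. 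This is the rational analogue of \cite[Proposition 6.5.3]{ivanyos1996algorithms} and is polynomial time once factoring is granted.

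Next, apply the theorem of Schwinning quoted above to construct, in polynomial time, a quaternion algebra $D_0$ over $\mathbb{Q}$ ramified exactly at $S$; by the local-global description of $\operatorname{Br}(\mathbb{Q})$ we get $D_0\cong D$. From a structure-constant presentation of $D_0$ one writes down a structure-constant presentation of $M_2(D_0)$ (basis: matrix units tensored with a basis of $D_0$). Now $A$ and $M_2(D_0)$ are isomorphic degree-$4$ central simple $\mathbb{Q}$-algebras, so by the reduction of \cite[Section 4]{ivanyos2012splitting} it suffices to compute an explicit isomorphism between $A^{op}\otimes_{\mathbb{Q}} M_2(D_0)$ and $M_{16}(\mathbb{Q})$; concretely one builds an irreducible module over this tensor product realizing the regular representations of both factors and composes the two identifications, as recalled in Section~2. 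Since the matrix degree ($16$) is an absolute constant here, the algorithm of \cite[Section 2]{ivanyos2012splitting} solves this last problem in polynomial time with the factoring oracle despite its exponential dependence on the degree in general. Transporting the zero divisor $\begin{pmatrix}1&0\\0&0\end{pmatrix}\in M_2(D_0)$ back through the resulting isomorphism $A\cong M_2(D_0)$ produces a zero divisor in $A$.

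I expect the main obstacle to be the first step, namely extracting the ramification set $S$ of $A$ from its structure constants: computing a maximal order and, above all, pinning down the local indices at the bad primes ultimately forces the factorization of an integer (the discriminant), which is precisely why the factoring oracle is genuinely required. Once the oracle is available each local check is a bounded linear-algebra computation, so the step — and hence the whole algorithm — is polynomial time. The remaining steps are routine given Schwinning's theorem and the constant-size instance of the explicit isomorphism problem.
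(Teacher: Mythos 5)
Your proposal is correct and follows essentially the same route as the paper: compute a maximal order and the ramification set of $A$ (the paper cites \cite{ivanyos1993finding} and \cite{ivanyos1996algorithms} for this), build $D_0$ via Schwinning's theorem, and reduce to an explicit isomorphism between $A^{op}\otimes M_2(D_0)$ and $M_{16}(\mathbb{Q})$ via \cite{ivanyos2012splitting}, exactly as in the proof of Corollary~\ref{divalg}. Your added observations (that the degree $16$ is constant so the exponential dependence of the \cite{ivanyos2012splitting} algorithm is harmless, and that the factoring oracle is needed to extract the ramification set) match remarks the paper makes elsewhere.
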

\begin{proof}
First we compute a maximal order in $A$ using the algorithm from \cite[Corollary 6.5.4]{ivanyos1993finding}. An extension of this algorithm \cite{ivanyos1996algorithms} computes the places where the algebra $A$ ramifies. Now we use Schwinning's algorithm to compute a division algebra $D_0$ which ramifies at exactly those places as $A$ which implies that $A\equiv M_2(D_0)$. Now we proceed in a similar fashion as in \cite[Theorem 16 ]{gomez2020primitive} or \cite[Corollary 3.22.]{csahok2022finding} but invoking the algorithm from \cite{ivanyos2012splitting} for computing the required explicit isomorphism. 
\end{proof}

An immediate corollary is the following:
\begin{corollary}
Let $L$ be a quadratic extension of $\mathbb{Q}$ and let $A_1$ and $A_2$ be isomorphic quaternion algebras over $L$. Then there exists a polynomial-time algorithm which is allowed to call an oracle for factoring integers, that computes an explicit isomorphism between $A_1$ and $A_2$. 
\end{corollary}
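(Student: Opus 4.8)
The plan is simply to instantiate the general algorithm established above (the theorem that reduces computing an isomorphism between two quaternion algebras over $L$ to solving Problems \ref{prob:maxright}, \ref{prob:quatdiv} and \ref{prob:quatzer} over the base field $K$) in the case $K=\mathbb{Q}$, with $L$ a quadratic number field. So the whole task reduces to checking that each of the three auxiliary problems admits a polynomial-time algorithm over $\mathbb{Q}$, where we are allowed to call an oracle for factoring integers. Note that ``quadratic extension of $\mathbb{Q}$'' is automatically separable since $\operatorname{char}\mathbb{Q}=0$, so the hypotheses of the general theorem are met.

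I would verify the three ingredients in turn. For Problem \ref{prob:maxright} — equivalently, finding an explicit isomorphism between an algebra given by structure constants and $M_4(\mathbb{Q})$ or $M_{16}(\mathbb{Q})$ — one invokes the algorithm of Ivanyos–Rónyai–Schicho from \cite[Section 2]{ivanyos2012splitting}, which is polynomial-time once a factoring oracle is available, because the matrix degree ($4$ or $16$) is fixed. For Problem \ref{prob:quatzer} — a zero divisor in a split quaternion algebra over the quadratic field $L$ — one uses \cite[Corollary 19]{kutas2019splitting}, again polynomial-time modulo factoring. For Problem \ref{prob:quatdiv} — a zero divisor in an algebra isomorphic to $M_2(D)$ with $D$ a rational division quaternion algebra — one appeals to the proposition proved immediately above, whose proof assembles a maximal-order computation \cite[Corollary 6.5.4]{ivanyos1993finding}, the local-index algorithm \cite{ivanyos1996algorithms}, Schwinning's construction \cite{schwinning2011ein} of a quaternion algebra with prescribed ramification, and finally the explicit isomorphism algorithm of \cite{ivanyos2012splitting} applied to $A^{op}\otimes M_2(D_0)$.

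With all three subroutines in hand, the general theorem produces the desired algorithm: it forms $B=A_1^{op}\otimes A_2$ together with the tensor product of the canonical involutions (an involution of the first kind), runs Algorithm \ref{alg:inv} to obtain either a zero divisor in $B$ or an involution of the second kind, and in both cases constructs an explicit isomorphism $B\cong M_4(L)$, which by \cite[Section 4]{ivanyos2012splitting} yields an explicit isomorphism $A_1\cong A_2$. The only step that is genuinely new relative to what was previously available is the treatment of Problem \ref{prob:quatdiv} over $\mathbb{Q}$, and hence the main point to get right is the correctness and running-time analysis of the preceding proposition; the remainder of the argument is bookkeeping that collects already-known algorithms. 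I expect no further obstacle, so the corollary follows.
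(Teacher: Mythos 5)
Your proposal matches the paper's argument exactly: the corollary is stated there as an immediate consequence of the general reduction theorem once Problems \ref{prob:maxright}, \ref{prob:quatdiv} and \ref{prob:quatzer} are verified over $K=\mathbb{Q}$, using \cite[Section 2]{ivanyos2012splitting}, the preceding proposition based on Schwinning's construction, and \cite[Corollary 19]{kutas2019splitting} respectively. You correctly identify that the only genuinely new ingredient is the treatment of Problem \ref{prob:quatdiv} over $\mathbb{Q}$, so nothing further is needed.
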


\section{Complexity questions and optimisations}\label{sect:complexity}

In this section, we give complexity estimates for the computation of maximal orders in separable algebras over function fields. We then present optimisations that are relevant to our use case. More precisely, we compute maximal orders for the smallest possible algebras and use them to construct orders with small discriminant in the algebras that we generate throughout execution of algorithm \ref{algo:main}.

\subsection{Complexity of maximal order computation}
	The complexity bottleneck of our algorithm is the computation of diverse maximal orders. Although polynomial-time algorithms exist for this task (see \cite{friedrichs2000maximalorder} and \cite{ivanyos2018computing}), the actual complexity makes them rather impractical as soon as the degree of $A$ increases. Throughout the execution of algorithm \ref{algo:main}, we may encounter two $K$-algebras of degree $16$. One is the corestriction of $A = B_1 \otimes B_2$ and the other is $A_K \otimes M_2(D)$, which is done when $A_K$ itself is isomorphic to some $M_2(D)$, with $D$ a division quaternion algebra (see sections \ref{sect:algo} and \ref{sect:impl} for more details). In both cases, we need to compute a zero divisor and therefore we need to compute maximal orders (In fact, we compute a maximal order over the ring $\mathbb{F}_q[t]$ and another one over the valuation ring corresponding to the degree valuation). In the following remark, we review descriptions of the algorithm used for maximal order computations in Magma, and give an upper bound for its complexity.

	The algorithm used for computing maximal orders over Dedekind domains in associative algebras over global function fields is the one given in section 3 and 4 of \cite{friedrichs2000maximalorder}, which is similar to the algorithm described in section 3 of \cite{ivanyos2018computing}. The computation proceeds from a starting order $\Lambda_0$. Letting $\mu$ be the degree of the discriminant of $\Lambda_0$, the algorithm has a worst-case complexity of $O(\mu n^5)$, where $n$ is the dimension of the input algebra (see \cite[proposition 3.17 and remark 4.18]{friedrichs2000maximalorder}).
	If no starting order is given, one is computed from the given basis of the input algebra. However, according to the discussion in subsection 3.3 of \cite{ivanyos2018computing}, an upper bound for $\mu$ is then $2(n^8d_D + n^2d_N)$, with $d_D$ and $d_N$, where $d_D$ and $d_N$ are upper bounds respectively of the degrees of the denominators and of the numerators of the structure constants of $A$. Note that in \cite{ivanyos2018computing} $n$, is the degree of the algebra, while the convention used in \cite{friedrichs2000maximalorder} is that $n$ is the dimension. We obtain the following:
	
	\begin{proposition}\label{prop:complexity}
	    The cost of computing a maximal order in a separable $\Fq(T)$-algebra of dimension $n$ is $O(n^9)$ when the degrees of the numerators and denominators of the structure constants of $A$ are bounded.
	\end{proposition}
	
	\begin{remark}
	    \cite{ivanyos2018computing} states its result for algebras that are isomorphic to matrix algebras, but this hypothesis is not used in the estimation of bounds for the degree of the discriminant. The estimates are therefore valid for more general separable algebras.
	\end{remark}
	
	\subsection{Optimisation of the maximal order computations}\label{subsect:opti}
	
		As suggested by proposition \ref{prop:complexity}, computing maximal orders in degree $16$ matrix algebras is the computational bottleneck of our algorithm. However, this complexity depends on the degree of the discriminant of the order we start our computation with. We use this to our advantage, by computing maximal orders for the input quaternion algebras, and then passing their bases through the various operations we execute on the algebras (tensor product, corestriction and Galois descent). While it is not true that after applying these operations we always get maximal orders, we may control the growth of the discriminant, and therefore the complexity of the later maximal order computations.

	We now give results concerning the discriminant of orders passing through our various operations. In this context, $R$ is a Dedekind domain, and $K$ is the fraction field of $R$. We stress that the results given here are targeted for function fields of odd characteristic, as this is the use case of our implementation.

	\begin{proposition} \label{prop:tensorproductdisc}
		Let $A$ and $B$ be central simple algebras over $K$, respectively of dimension $m$ and $n$, and let $O_A$ and $O_B$ be $R$-orders respectively of $A$ and $B$. Then $O_A \otimes_R O_B$ is an $R$-order in $A \otimes_K B$, and $$\mathrm{Disc}(O_A \otimes_R O_B) = \mathrm{Disc}(O_A)^{n} \mathrm{Disc}(O_B)^{m}.$$
	\end{proposition}
	
	\begin{proof}
		We first note that in general, if $O$ is a $R$-algebra, the global discriminant is a product of the local ones: $\mathrm{Disc}(O) = \bigcap_{\mathfrak{p} \in \mathrm{Spec}(R)} \mathrm{Disc}(O_\mathfrak{p})$. It follows that we may localise and assume that $R$ is a PID. In particular, $O_A$ and $O_B$ are free $R$-modules.

		Let $(a_1,...,a_{m})$ be a $R$-basis of $O_A$, and $(b_1,...,b_{n})$ be a $R$-basis of $O_B$. Then since $O_a$ and $O_b$ are free $R$-modules, $(a_i \otimes b_j)_{(i,j)}$ is a $R$-basis of $O_A \otimes O_B$, and
		$$\mathrm{Disc}(O_1 \otimes O_2) = \det((\mathrm{tr}((a_{i_1} \otimes b_{j_1})(a_{i_2} \otimes b_{j_2})))_{(i_1,j_1),(i_2,j_2)})$$
		where we mean that the matrix in the determinant has its columns indexed by the couples $(i_2,j_2)$ with $1 \leq i_2 \leq m$ and $1 \leq j_2 \leq n$. Likewise, its rows are indexed by the couples $(i_1,j_1)$ with $1 \leq i_1 \leq m$ and $1 \leq j_1 \leq n$.

		We may compute reduced traces over a common splitting field for $A$ and $B$, and therefore if $a \in A$ and $b \in B$, $a \otimes b$ is a Kronecker product of matrices. It follows that $\mathrm{tr}(a \otimes b) = \mathrm{tr}(a) \mathrm{tr}(b)$. Now,
		$$\mathrm{Disc}(O_1 \otimes O_2) = \det((\mathrm{tr}(a_{i_1}a_{i_2}) \mathrm{tr}(b_{j_1}b_{j_2}))_{(i_1,j_1),(i_2,j_2)})$$
		We recognize that the matrix in the determinant is in fact the Kronecker product of matrices $(\mathrm{tr}(a_{i_1}a_{i_2}))_{1 \leq i_1 \leq m,1 \leq i_2 \leq m}$ and $(\mathrm{tr}(b_{j_1}b_{j_2}))_{1 \leq j_1 \leq n,1 \leq j_2 \leq n}$, and the lemma follows.
	\end{proof}

	Next, we consider the computation of the corestriction of a matrix algebra on a quadratic extension $K$ of a rational function field $\mathbb{F}_q(t)$ in odd characteristic, and let $\sigma$ be the non-trivial $\mathbb{F}_q(t)$-automorphism of $K$. We let $R \subsetneq \mathbb{F}_q(t)$ be a Dedekind domain, and we call $S$ the integral closure of $R$ in $K$. Let $O$ be a maximal $S$-order in $A$. Then $O \otimes_R O^\sigma$ embeds in $A \otimes_R A^\sigma$ in an obvious manner and is stable under the switch map (see definition \ref{def:switch}). We call $\Cor(O) = (O \otimes_R O^\sigma) \cap \Cor(A)$ the corestriction of $O$. We may easily construct a basis of $\Cor(O)$ in $\Cor(A)$ from a basis of $O$ in $A$. Unfortunately, $\Cor(O)$ is not a maximal $R$-order in $\Cor(A)$. However, we compute its discriminant, whose degree only depends on the quadratic field $K$. We first need a lemma:
	
	\begin{lemma}\label{lemma:uniformizer}
	With notations as above, let us assume further that $R$ is a DVR, and that its corresponding valuation in $\mathbb{F}_q(T)$ ramifies in $K$. Then $S$ admits a uniformizer $\pi$ such that $\sigma(\pi) = -\pi$.
	\end{lemma}
	
	\begin{proof}
	    Since $q$ is odd, we may find $\theta \in K \setminus \Fq(T)$ such that $\theta^2 \in \Fq(T)$. That is, $\sigma(\theta) = -\theta$. Up to multiplication by an element of $\Fq(T)$, we may assume that $\theta \in S$ and that its valuation is $0$ or $1$. Let $k$ be the residue field of $S$, then $\sigma$ induces the identity on $k$. In $k$, we therefore have $\overline{\sigma(\theta)} = \overline{\theta} = -\overline{\sigma(\theta)}$ and since $k$ has odd characteristic, $\overline{\theta} = \overline{\sigma(\theta)} = 0$. Therefore, $\theta$ is a uniformizer of $S$ and $\sigma(\theta) = -\theta$.
	\end{proof}

	\begin{proposition}\label{prop:coresram}
		Let the notations be as above. Then let $p_1,...,p_m$ be the irreducible elements of $R$ that ramify in $S$. Then $$\mathrm{Disc}(\mathrm{Cor}(O)) = \prod_{1 \leq i \leq m} p_i^{\frac{n^4-n^2}{2}}.$$
	\end{proposition}
	
	\begin{proof}
		We first prove the result in the case that $R$ is a DVR. Let $v$ be the valuation corresponding to $R$ in $K$. 

		If $v$ does not ramify in $S$, then this is proposition \ref{prop:coresunram}. We now assume that $v$ ramifies in $S$. 
		
		For the computation that follows, we will use the delta symbol for tuples. By this, we mean that if $(i,j)$ and $(o,p)$ are couples of indices, then $\delta_{(i,j),(o,p)}$ is $1$ if $(i,j) = (o,p)$ and is zero otherwise. The definition is extended to tuples with more than two elements in the obvious manner. We also will use the lexicographic order on tuples of indices.

		Let $\pi$ be a uniformizer of $S$ such that $\sigma(\pi) = - \pi$, which exists by lemma \ref{lemma:uniformizer}. Up to conjugation by an automorphism, we may assume that $O = M_n(S)$. Let $(E_{i,j})_{1 \leq i,j \leq n}$ be the canonical matrix basis of $M_n(S)$ over $S$. Then a basis of $\mathrm{Cor}(O)$ is $$B = (E_{i,j} \otimes E_{i,j})_{(1,1) \leq (i,j) \leq (n,n)}$$ $$\cup (E_{i,j} \otimes E_{k,l} + E_{k,l} \otimes E_{i,j})_{(1,1) \leq (i,j) < (k,l) \leq (n,n)}$$ $$\cup (\pi(E_{i,j} \otimes E_{k,l} - E_{k,l} \otimes E_{i,k}))_{(1,1) \leq (i,j) < (k,l) \leq (n,n)}.$$

		The discriminant of $\Cor(O)$ is then the ideal of $R$ generated by $$\det(\tr(b_ib_j))_{1 \leq i,j \leq n^4}.$$ Since $R$ is a DVR, we in fact only need to compute the valuation of this determinant in $R$.

		 We now compute the value of $\tr(b_i b_j)$ for the various choices of $b_i$ and $b_j$ in $B$. We use the general fact that $tr(E_{i,j}E_{k,l}) = \delta_{(i,j),(l,k)}$. For what follows, we consider the indices $1 \leq i,j,k,l,o,p,q,r \leq n$. We also make the assumptions that $(i,j) \neq (k,l)$ and that $(o,p) \neq (q,r)$. It is then straightforward to check the following identities. 
	\begin{align*}
		\tr((E_{i,j} \otimes E_{i,j})(E_{o,p} \otimes E_{o,p})) &= \delta_{(i,j),(p,o)} \\
		\tr((E_{i,j} \otimes E_{i,j})(E_{o,p} \otimes E_{q,r} + E_{q,r} \otimes E_{o,p})) &= 0 \\
		\tr((E_{i,j} \otimes E_{i,j})(E_{o,p} \otimes E_{q,r} - E_{q,r} \otimes E_{o,p})) &= 0 \\
		\tr((E_{i,j} \otimes E_{k,l} + E_{k,l} \otimes E_{i,j})(E_{o,p} \otimes E_{q,r} - E_{q,r} \otimes E_{o,p})) &= 0\\
		\tr((E_{i,j} \otimes E_{k,l} - E_{k,l} \otimes E_{i,j})(E_{o,p} \otimes E_{q,r} + E_{q,r} \otimes E_{o,p})) &= 0\\
		\tr((E_{i,j} \otimes E_{k,l} + E_{k,l} \otimes E_{i,j})(E_{o,p} \otimes E_{q,r} + E_{q,r} \otimes E_{o,p})) &= 2(\delta_{(i,j,k,l),(p,o,r,q)} + \delta_{(i,j,k,l),(r,q,p,o)})\\
		\tr((E_{i,j} \otimes E_{k,l} - E_{k,l} \otimes E_{i,j})(E_{o,p} \otimes E_{q,r} - E_{q,r} \otimes E_{o,p})) &= 2(\delta_{(i,j,k,l),(p,o,r,q)} - \delta_{(i,j,k,l),(r,q,p,o)})\\
	\end{align*}

	Now, the last two lines represent the trace of the product of two elements of $B$ if and only if the inequalities $(i,j) < (k,l)$ and $(o,p) < (q,r)$ are satisfied. Given $i,j,k,l$ such that $(i,j) < (k,l)$, either $(j,i) < (l,k)$ or $(l,k) < (j,i)$.

		It follows that each line of the matrix $(\tr(b_\alpha b_\beta)_{1 \leq \alpha,\beta < n^4}$, has only one non-zero coefficient. The non-zero coefficient has valuation $0$ in $S$, unless the index of the line is larger than $\frac{n^4 + n^2}{2}$, in which case the valuation is $2$. Since the matrix is symetric, this property is also true for its columns. It follows that there exists a permutation of the collumns such that the resulting matrix is diagonal. Therefore, the valuation of $\det(\tr(b_\alpha b_\beta)_{1 \leq \alpha,\beta < n^4}$ is $n^4 - n^2$ in $S$. As a result, letting $\mathfrak{p}$ be the unique maximal ideal of $R$, we get $$\mathrm{Disc}(\Cor(O)) = \mathfrak{p}^\frac{n^4-n^2}{2}.$$

		Now, let $R$ be a Dedekind domain. Then for any $R$-order $O'$, it is well known that $\Disc(O') = \bigcap_{\mathfrak{p} \in \mathrm{Spec}(R)} \Disc(O'_\mathfrak{p})$. Therefore, the result will follow from the DVR case if we prove that for $\mathfrak{p}$ a prime of $R$, $\Cor(R_\mathfrak{p} O) = R_\mathfrak{p}\Cor(O)$. However, this is immediate as multiplication by an element of $R_\mathfrak{p}$ commutes with the switch map.
	\end{proof}

	The last operation to consider is the Galois descent operation, using an involution of the second kind. It does not seem possible here to obtain such explicit results as we have had before. A reason for that is that the discriminant of the resulting $R$-order largely depends on the choice of involution of the second kind. In \cite{granath2006descent}, the situation is studied in the case of quaternion algebras.

	Following results from this subsection, we make the following optimisations to our algorithm: Maximal orders of quaternion algebras $B_1$ and $B_2$ are immediately computed. Furthermore, after applying any operation to one of our algebras, we apply the same operation to its maximal orders and then compute a maximal order of the new algebra from the order we obtain.
	
	We may now compare the efficiency of the optimised version of our algorithm and that of the naive one. The complexity estimates are given assuming that the degree of the discriminant of input quaternion algebras $B_1$ and $B_2$ is bounded. In the naive approach, we directly compute maximal orders of the corestriction of the algebra $A = B_1 \otimes B_2$. This is a call with complexity $O(n^9)$ and an input size $n = 256$. With the optimised approach, we first compute maximal orders in $B_1$ and $B_2$, which is two call with complexity in $O(n^9)$ and input size $n=8$ (recall that we count dimension over $\Fq(T)$). We must then compute a maximal order in $A = B_1 \otimes B_2$, but starting from an order with discriminant of bounded degree (see proposition \ref{prop:tensorproductdisc}). This is therefore a call with complexity in $O(n^5)$ and input size $n = 32$. Finally, we must compute a maximal order in the corestriction of $A$. This time, using proposition \ref{prop:coresram} we start from an order with discriminant $O(n)$, where $n$ is the dimension of the corestriction of $A$. This is therefore a call with complexity $O(n^6)$ and input size $n=256$. This last call is by far the most expensive of the optimised computation. We give concrete running time comparisons in subsection \ref{subsect:data}.
	
\section{Implementation}\label{sect:impl}

    In this section we present our implementation\footnote{\url{https://github.com/QuaternionIsomorphisms/QuaternionIsomorphisms/}} of algorithm \ref{algo:main} in Magma. This includes an implementation of the main algorithm from \cite{ivanyos2018computing} for computing an explicit isomorphism of a central simple algebra to a matrix algebra. This implementation, which is also used in \cite{csahok2022finding}(but in that case only on quaternion algebras), is of independent interest.
    
    We stress that due to the impracticality of algorithms for maximal order computation in algebras of dimension 256, our implementation of algorithm \ref{algo:main} currently does not terminate in reasonable time. This highlights the interest of improving the results of \cite[section 3]{ivanyos2018computing} and \cite{friedrichs2000maximalorder}, as the existence of a more efficient algorithm for this task would render our own algorithm practical. We stress that any algorithm for maximal order computation with complexity depending on the discriminant of a starting order would benefit from the optimisation described in subsection \ref{subsect:opti}.
    
    In a first subsection, we detail the subroutines we implement for \ref{algo:main}, and in a second subsection we give results of computational experiments.
    
    \subsection{Implementation details}
    For clarity of exposition, we present as algorithm \ref{algo:main} a succinct pseudo-code description of the main function in our implementation of the algorithm from theorem \ref{thm:mainalgo}. 
    
    \begin{algorithm}
	\KwIn{$(B_1,B_2)$ two quaternion algebras defined on a quadratic field $L$ over $K = \mathbb{F}_q(t)$, with $q$ odd.}
	\KwOut{A $L$-algebra isomorphism $B_1 \rightarrow B_2$.}

	$A \leftarrow B_1 \otimes_L B_2$\;
	$z,s \leftarrow \mathrm{InvolutionSecondKind}(A)$\;

	\If{z = 0}{
		$A_K \leftarrow \mathrm{Descent}(A,s)$ \;
		$z \leftarrow \mathrm{ZeroDivisor}(A_K)$\;
	}

	$e \leftarrow \mathrm{RankOneIdempotent}(A,z)$\;

	\Return{$\mathrm{IsomorphismFromIdempotent(B_1,B_2,e)}$}
	\caption{Main algorithm} \label{algo:main}
\end{algorithm}

	We now detail our implementation of the subroutines in algorithm \ref{algo:main}. In what follows, $L$ will be a quadratic extension of $\Fq(T)$.
	\begin{itemize}
		\item Tensor product computation is straightforward: one defines the algebra of dimension $16$ over $L$, with basis $(b_{1,i} \otimes b_{2,j})_{1 \leq i,j \leq 4}$. The structure constants of $A = B_1 \otimes B_2$ are then products of the structure constants of $A$ and $B$. We also construct the canonical injections from $B_1$ and $B_2$ to $B_1 \otimes B_2$. These maps are useful to give a succinct description of the conjugation involution over $B_1 \otimes B_2$ and to compute a basis of $O_1 \otimes O_2$, were $O_1$ and $O_2$ are maximal orders in $B_1$ and $B_2$.
		
		\item DescendAlgebra: Given a $L$-algebra $A$ and a semi-linear algebra automorphism $f$, we return the $K$-subalgebra of elements of $A$ fixed by $F$. We also compute low discriminant orders in this subalgebra by taking the fixed points of maximal orders of $A$ if such orders are known. The only subtlety regarding the implementation is that in order to make it efficient in Magma, the map $f$ must be defined on a $K$-vector space representing algebra $A$, since it is only semi-linear over $L$.
		
	    \item Corestriction: Computing the corestriction of an $L$-algebra $A$ is a straightforward application of proposition \ref{prop:cor}. We apply the non-trivial $\Fq(T)$-automorphism of $L$ $\sigma$ to the structure constants of $A$ to compute $A^\sigma$, and a map between $A$ and $A^\sigma$. Then maximal orders of $A$ are computed, and from them we directly obtain maximal orders of $A^\sigma$. $A \otimes A^\sigma$ and its maximal orders are computed as described above. The switch map is then computed in a straightforward manner using maps $A \rightarrow A^\sigma$, $A \rightarrow A \otimes A^\sigma$ and $A^\sigma \rightarrow A \otimes A^\sigma$. 
	    We then apply the Descent subroutine to $A \otimes A^\sigma$ and the map switch to obtain the corestriction of $A$, orders with small discriminant and a map from the corestriction to $A \otimes A^\sigma$.
	    
		\item InvolutionSecondKind: This is algorithm \ref{alg:inv}. Details of the computation of the corestriction are given below. Once the corestriction is computed, we compute a rank one idempotent $e$. Then $1-e$ generates a maximal right ideal $I$ of $B$. We therefore compute the ideal generated by $1-e$ in $A \otimes A^\sigma$. The rest is a straightforward implementation of algorithm \ref{alg:inv}.
		
		\item RankOneIdempotent when $A \simeq \mathcal{M}_n(K)$: This is the main algorithm from \cite[section 4]{ivanyos2013improved}. This algorithm uses many subroutines: we implement lattice reduction algorithms described in \cite[section 2]{ivanyos2018computing} and \cite[section 1]{lenstra1985factoring}, and the computation of the WedderburnMalcev complement of a finite algebra  following \cite[Section 3]{degraaf1997computing}. The only remaining technical part is then to compute the intersection of maximal orders in $A$ following \cite[lemma 25]{ivanyos2018computing}, and to express its structure constants as an algebra over $\mathbb{F}_q$.
		
		\item ZeroDivisor when $A \simeq \mathcal{M}_n(D)$, with $D$ a division quaternion algebra over $K$: Following \cite[Theorem 18]{gomez2020primitive}, we compute local indices of $A$ and use this information to construct a quaternion algebra $D'$ isomorphic to $D$, and then a representation of $M_m(D')$ with structure constants. We then use the RankOneIdempotent subroutine described above and the IsomorphismFromIdempotent subroutine described below to compute an isomorphism $A \simeq M_m(D')$ and return a zero divisor. Note that the hypothesis from \cite[Theorem 18]{gomez2020primitive} on the splitting places of $A$ is not needed here since we restrict to the case that $D$ is a quaternion algebra, and we therefore only need to compute local indices instead of Hasse invariants.
		
		\item RankOneIdempotent when $A \simeq \mathcal{M}_4(L)$ and a zero divisor $z$ is given: Following the discussion in the proof of theorem \ref{thm:mainalgo},  we compute $e$, the left unit of the right ideal $zA$. If $z$ has rank $1$ or $3$, we are done as per the discussion. If $z$ has rank $2$, we apply the algorithm from \cite[Proposition 43]{ivanyos2019explicit} to the split quaternion algebra $eBe$.
		
		\item IsomorphismFromIdempotent: Given a rank one idempotent in algebra $A = B_1 \otimes B_2^{op}$, we compute an explicit isomorphism $B_1 \simeq B_2^{op}$. Note that we in fact computed $A = B_1 \otimes B_2$, but since $B_2$ is a quaternion algebra, the conjugation gives an explicit isomorphism $B_2 \simeq B_2^{op}$. This is an implementation of the algorithm given by \cite[Corolary 10]{ivanyos2012splitting}.
	\end{itemize}

\subsection{Computational results}\label{subsect:data}
    In table \ref{tab:corescompare} we give running times for the task of computing maximal orders in the corestriction of a degree $2$ matrix algebra over $K=\Fq(T)(\sqrt{D})$, with $D$ a polynomial of degree $2$. The running time includes the computation of the corestriction itself. Running times are given in seconds.
    
    \begin{table}[h]
        \centering
        \begin{tabular}{cc}
            Naive version & Optimised version \\
            $95.180$ & $7.160$ \\
            $1128.870$ & $46.990$ \\
            $2338.350$ & $155.520$
         \end{tabular}
        \caption{Running time for computing maximal orders in the corestriction of degree $2$ matrix algebras}
        \label{tab:corescompare}
    \end{table}
    
    The \emph{naive version} column corresponds to the running time of the direct approach to the task. That is, computing the corestriction using linear algebra and then computing a maximal order in the corestriction algebra using the algorithm from \cite{friedrichs2000maximalorder} and \cite{ivanyos2018computing}. The worst-case complexity for this computation is $O(n^9)$, where $n = 16$ is the dimension of the corestriction algebra (see \ref{prop:complexity}). The \emph{optimised version} column shows the running time we obtain using the approach detailed in subsection \ref{subsect:opti}. The worst-case complexity then drops to $O(n^6)$. The results in table \ref{tab:corescompare} show that our optimisation is effective in practice.

	In table \ref{tab:rankone} we give running times for executions of the RankOneIdempotent subroutine from algorithm \ref{algo:main}. We execute it on a $\mathbb{F}_{17}(T)$-algebra $A$ isomorphic to $M_n(\mathbb{F}_{17}(T))$.We recall that this subroutine is an implementation of the main algorithm from \cite{ivanyos2018computing}. It begins with the computation of a maximal $\mathbb{F}_{17}[T]$-order and a maximal $R$-order of $A$, where $R$ is the valuation ring for the degree valuation. That is, $R$ is the ring of elements in $\mathbb{F}_{17}(T)$ that have a denominator of higher degree than their numerator.
	
	Running times are again given in seconds. We also give the running time of the maximal order computations.
	
	\begin{table}[h]
	    \centering
	    \begin{tabular}{cccc}
	       n  & Maximal $\mathbb{F}_{17}[T]$-order computation & Maximal $R$-order computation & Running time \\
	       $2$ &$4.690$ & $0.390$ & $5.510$ \\
	       $3$ & $7245.840$ & $401.000$ & $7706.890$
	    \end{tabular}
	    \caption{Runtime for the RankOneIdempotent subroutine}
	    \label{tab:rankone}
	\end{table}
	
	These results show that the complexity bottleneck of this subroutine is indeed the computation of maximal orders. We recall that our use case involves running this computation on algebras isomorphic to $M_{16}(\Fq)$. We conclude that our algorithm would be made practical by the discovery of a fast algorithm for computing maximal orders in separable algebras over $\Fq(T)$.

\bibliographystyle{apalike}
\bibliography{bib}

\end{document}